\definecolor{dpblue}{RGB}{1,31,91} 
\theoremstyle{plain}
\newtheorem{theorem}{Theorem}
\newtheorem{lemma}[theorem]{Lemma}
\newtheorem{proposition}[theorem]{Proposition}
\newtheorem{remark}[theorem]{Remark}
\newtheorem{corollary}[theorem]{Corollary}
\numberwithin{equation}{section}
\numberwithin{theorem}{section}
\newcommand{\eqdef }{\overset{\mbox{\tiny{def}}}{=}}
\newcommand{\pv}{p}
\newcommand{\pZ}{\pv^0}
\newcommand{\qv}{q}
\newcommand{\qZ }{\qv^0}
\newcommand{\pqs}{{p^0}+{q^0}+\sqrt{s}}
\newcommand{\sqs}{\sqrt{s}}
\newcommand{\tet}{\frac{\theta}{2}}
\newcommand{\pqhat}{\frac{1}{2g}\left(\frac{{q^0}}{{p^0}}d-a\right)}
\newcommand{\CE}{\frac{\left(p+q\right)\cdot w }{2g\left({p^0}+{q^0}+\sqrt{s}\right)^2s^{\frac{3}{2}}}}
\newcommand{\aoo}{a_{11}}
\newcommand{\aot}{a_{12}}
\newcommand{\aox}{a_{13}}
\newcommand{\ato}{a_{21}}
\newcommand{\att}{a_{22}}
\newcommand{\atx}{a_{23}}
\newcommand{\axo}{a_{31}}
\newcommand{\axt}{a_{32}}
\newcommand{\axx}{a_{33}}
\newcommand{\R}{\mathbb{R}}
\title[On the Determinant Problem for the Relativistic Collision map]{On the Determinant Problem for the Relativistic Boltzmann Equation}
\author[J. Chapman]{James Chapman$^\ddagger$}
\address{$^\ddagger$Department of Mathematics, University of Pennsylvania, Philadelphia, PA 19104, USA.  \href{mailto:chapmanj@sas.upenn.edu}{chapmanj@sas.upenn.edu}}
\author[J. W. Jang]{Jin Woo Jang$^*$}
\address{$^*$Institute for Applied Mathematics, University of Bonn, 53115 Bonn, Germany. \href{mailto:jangjinw@iam.uni-bonn.de}{jangjinw@iam.uni-bonn.de} }
\author[R. M. Strain]{Robert M. Strain$^\dagger$}
\address{$^\dagger$Department of Mathematics, University of Pennsylvania, Philadelphia, PA 19104, USA.  \href{mailto:strain@math.upenn.edu}{strain@math.upenn.edu}}
   \def\MR#1{}
\begin{document}

\keywords{Special relativity, Boltzmann equation.}
\subjclass[2010]{Primary: 35Q20, 76P05, 82C40,
	35B65, 83A05. }



\begin{abstract}
This article considers a long-outstanding open question regarding the Jacobian determinant for the relativistic Boltzmann equation in the \textit{center-of-momentum} coordinates. For the Newtonian Boltzmann equation, the center-of-momentum coordinates have played a large role in the study of the Newtonian non-cutoff Boltzmann equation, in particular we mention the widely used cancellation lemma \cite{ADVW}. In this article we calculate specifically the very complicated Jacobian determinant, in ten variables, for the relativistic collision map from the momentum $p$ to the post collisional momentum $p'$; specifically we calculate the determinant for $p\mapsto u = \theta p'+\left(1-\theta\right)p$ for $\theta \in [0,1]$. Afterwards we give an upper-bound for this determinant that has no singularity in both $p$ and $q$ variables. Next we give an example where we prove that the Jacobian goes to zero in a specific pointwise limit. We further explain  the results of our  numerical study which shows that the Jacobian determinant has a very large number of distinct points at which it is machine zero. This generalizes the work of Glassey-Strauss (1991) \cite{MR1105532} and Guo-Strain (2012) \cite{Guo-Strain2}. These conclusions make it difficult to envision a direct relativistic analog of the Newtonian cancellation lemma in the center-of-momentum coordinates.
\end{abstract}

\setcounter{tocdepth}{1}

\maketitle
\tableofcontents

\thispagestyle{empty}

\section{Introduction}\label{sec:intro}

The special relativistic Boltzmann equation is a fundamental model for relativistic gases \cite{C-K, DeGroot} which obey Einstein's theory of special relativity. The equation describes the dynamics of the statistical distribution of relativistic particles when the binary collisions among particles occur frequently enough to dominate the dynamics, so that one can assume that the rate of change along particle paths in phase space is mainly due to the binary collisions among the particles. 
The relativistic Boltzmann equation is a central model in the relativistic collisional kinetic theory.

The relativistic Boltzmann equation can be expressed as 
$$
\partial_t F+\hat{p}\cdot \nabla_x F= Q\left(F,F\right),
$$
and the normalized velocity of a particle $\hat{p}$ is given by
$$
\hat{p}=c\frac{p}{{p^0}}=\frac{p}{\sqrt{1+\frac{|p|^2}{c^2}}}.
$$
Above $p\in \mathbb{R}^3$, $x\in \Omega$ where $\Omega$ is a domain and $t\ge 0$.
Here $c$ denotes the speed of light, which is a constant.  Also, $p^0 = \sqrt{c^2+|p|^2}$ denotes the relativistic particle energy with the rest mass normalized to be 1.  For $q\in \mathbb{R}^3$, then $q^0$ is defined similarly.
From here on we normalize the speed  of light to one by setting $c=1$.

The relativistic Boltzmann collision operator is given by
\begin{equation}
	\label{omegaint}
	Q\left(f,h\right)=\int_{\mathbb{R}^3} dq\int_{\mathbb{S}^2} dw \hspace{1mm} v_\phi
	\hspace{1mm} 
	\sigma\left(g,\vartheta\right)[f\left(p'\right)h\left(q'\right)-f\left(p\right)h\left(q\right)],
\end{equation}
In this operator we consider a pair of relativistic particles with momenta $p$ and $q$ that after a collision have post-collisional momenta $p'$ and $q'$. The post-collisional momenta $p'$ and $q'$ can further be written as \eqref{p'} and \eqref{q'} below.
Then $v_\phi=v_\phi\left(p,q\right)$ is the M$\phi$ller velocity which is given by
$$
v_\phi\left(p,q\right)
\eqdef 
\sqrt{\Big|\frac{p}{{p^0}}-\frac{q}{{q^0}}\Big|^2-\Big|\frac{p}{{p^0}}\times\frac{q}{{q^0}}\Big|^2}=\frac{g\sqrt{s}}{{p^0}{q^0}}.
$$
Above $g$ and $s$ are defined below in \eqref{g2} and \eqref{s} respectively.

Further the relativistic Boltzmann collision kernel $\sigma(g,\vartheta)$ is a non-negative function which only depends on the relative momentum $g$ and the scattering angle 
$\vartheta$. 
The scattering angle $\vartheta$ is defined by
\begin{equation}\label{scattering.angle}
\cos\vartheta\eqdef \frac{k}{|k|}\cdot w,
\end{equation}
where $k$ is defined as 
$$
k\eqdef -\frac{p+q}{\sqrt{s}}(p^0-q^0)+(p-q)+(\gamma-1)(p+q)\frac{(p+q)\cdot(p-q)}{|p+q|^2}.
$$ 
And $\gamma$ is defined as
$$
\gamma \eqdef \frac{p^0+q^0}{\sqrt{s}}.
$$
The proof for this identity is given in \cite[page 5-6]{MR2765751}.
This angle $\vartheta$ was proven to be a well defined angle in \cite{GL1996}.  
It is standard to assume that $\sigma$ takes the form of the product in its arguments; i.e., $$\sigma(g,\vartheta)\eqdef \Phi(g)\sigma_0(\vartheta).$$ In general, we suppose both $\Phi$ and $\sigma_0$ are non-negative functions.

Depending on the local integrability of the angular function $\vartheta \mapsto \sigma_0(\vartheta)$, we classify the problem into two regimes: with and without an angular \textit{cutoff}. If the angular function satisfies either $\vartheta \mapsto\sigma_0\in L^1_{loc}(\mathbb{S}^2)$ or $\sigma_0 \in L^\infty (\mathbb{S}^2)$, then we say that the problem is with an angular \textit{cutoff} \cite{Gra}. Otherwise, we say that
the problem is without an angular \textit{cutoff}. Examples of physical non-cutoff relativistic collision kernels were explained for example in \cite{MR2679588,Jang2016}.

Without loss of generality, we may assume that the collision kernel $\sigma$ is supported on
\begin{equation}\label{angle.condition}
    \cos\vartheta\geq 0, \quad  \text{i.e.} \quad 0\leq \vartheta \leq \frac{\pi}{2}.
\end{equation}
Otherwise, the following \textit{symmetrization} \cite{MR1379589} will reduce to this case:
 $$
\bar{\sigma}(g,\vartheta)=[\sigma(g,\vartheta)+\sigma(g,-\vartheta)]1_{\cos\vartheta\geq 0},
$$
where $1_A$ is the indicator function of the set $A$.

The post-collisional momenta in the \textit{center-of-momentum} expression are written as 
\begin{equation}
	\label{p'}
	p'=\frac{p+q}{2}+\frac{g}{2}\left( w +\left(\gamma-1\right)\left(p+q\right)\frac{\left(p+q\right)\cdot w }{|p+q|^2}\right),
\end{equation}
and
\begin{equation}
	\label{q'}
	q'=\frac{p+q}{2}-\frac{g}{2}\left( w +\left(\gamma-1\right)\left(p+q\right)\frac{\left(p+q\right)\cdot w }{|p+q|^2}\right).
\end{equation}
We point out that $\gamma-1 \geq 0$ from \eqref{gamma.calc} below.
Note that $p, q \in \mathbb{R}^3$ and $| w| =1$ is on the sphere $w \in \mathbb{S}^2$.  

Further the energy-momentum conservation laws say that
\begin{equation}\label{collision.invariants}
	p+q = p^\prime + q^\prime, \quad 
p^0 + q^0 = p^{\prime 0} + q^{\prime 0}.    
\end{equation}
Now the quantities $s$ and $g$ denote the square of the \textit{total energy} in the \textit{center-of-momentum} system $p+q=0$ and the \textit{relative momentum}, respectively. They are defined as
\begin{equation}
	\label{g2}
	g=g(p,q)=\sqrt{2\left({p^0}{q^0}-\sum^3_{i=1} p^i q^i-1\right)}\geq0,
\end{equation}
and 
\begin{equation}
	\label{s}
	s=s(p,q)=g^2+4 = 2\left({p^0}{q^0}-\sum^3_{i=1} p^i q^i+1\right),
\end{equation}
We also notice that $s\geq 4.$

For the Newtonian Boltzmann equation the coordinate system which is analogous to \eqref{p'} and \eqref{q'} is the following:
\begin{equation}\label{newtonian.pq}
    p' = \frac{p+q}{2}+\frac{|p-q|}{2}w , \quad q' = \frac{p+q}{2}-\frac{|p-q|}{2}w.
\end{equation}
Indeed, taking an appropirate limit as $c\to\infty$ in \eqref{p'} and \eqref{q'} yields \eqref{newtonian.pq}.  In the non-cutoff Newtonian Boltzmann theory the change of variables $p\to p'$ using \eqref{newtonian.pq}, and in particular the cancellation lemma from \cite{ADVW}, has been shown to be very important for understanding the fractional diffusive behavior of the collision operator.  In particular with \eqref{newtonian.pq} for 
\begin{equation}
	\label{u.split}
	u = \theta p'+\left(1-\theta\right)p, \quad \theta \in [0,1].
\end{equation}
The change of variable $p\to u$ is known to have Jacobian determinant \cite{MR2784329,MR2795331}:
$$
\left| \frac{d u_i}{ dp_j}  \right| 
= 
\left(1-\frac{\theta}{2}\right)^2\left\{
\left(1-\frac{\theta}{2}\right) + 
\frac{\theta}{2} \langle k, w \rangle
\right\}.
$$
Where the unit vector is $k = (p-q)/|p-q|$.  Therefore under the condition that $\langle k, w \rangle\ge 0$ then this Jacobian is uniformly bounded from below.

In this article we unfortunately notice that the analogous change of variables $p\mapsto u$ in the relativistic problem using \eqref{p'} behaves in comparison very badly, and can have zero determinant even under the corresponding angle condition \eqref{angle.condition}.

\subsection{A problem with the Jacobian determinant}
The rest of this article mainly deals with a long-outstanding open question regarding the Jacobian determinant, which arises when one takes a change of variables from a pre-collisional momentum $p$ or $q$ to a post-collisional momentum $p'$ or $q'$, in the \textit{center-of-momentum} coordinates. 

Historically, the well-posedness theory for the classical and the relativistic Boltzmann equations have been studied quite extensively. 
One of the main difficulties which arise in the theory of well-posedness for the Boltzmann equation is to obtain an appropriate a-priori estimate. In other words, one must treat the gain and the loss term appropriately, so one can obtain some desired estimates on them.

 Whenever one deals with estimating the upper- or lower-bounds for the Boltzmann collision operator \eqref{omegaint}, especially for the gain term in the operator, one encounters the Jacobian determinant as the functions in the post-collisional momentum $p'$ or $q'$ appear inside the integration with respect to the pre-collisional measures $dp$ or $dq.$ However, it unfortunately appears to be of limited utililty to use the change of pre-post collisional variables $p \mapsto p'$ or $q\mapsto q'$ as we will explain how the Jacobian is no longer uniformly bounded above and below in the relativistic scenario. 
 
 Traditionally, this issue has been resolved along the following different lines:
 \begin{itemize}
 \item One approach is to check if the Jacobian of the change of variables $p\ (\text{or} \ q)\mapsto p'\ (\text{or} \ q')$ is uniformly bounded above and below. The situation that one must consider is the change of variables in only one variable like $(p,q)\mapsto (p',q) \ (\text{or} \ \mapsto (p,q'))$.  This occurs especially when one considers the linearization of the Boltzmann collision operator \eqref{omegaint}.  This, indeed, is useful in the non-relativistic scenario with the center-of-momentum representation of the variables \eqref{newtonian.pq}.  In this Newtonian situation it is known, e.g. \cite{ADVW}, that the Jacobian is well behaved.

 \item Another approach is to check if one can also change both variables $(p,q)$ to $(p',q')$ at the same time.   In the relativistic situation for this approach one can use the following coordinates \cite{GS3}: \begin{equation}\label{alternativerepresentation}
 p'=p+a(p,q,w)w,\quad \text{and}\quad q'-a(p,q,w)w,\end{equation} where 
 $$
 a(p,q,w)=\frac{2(p^0+q^0)(w\cdot (p^0q-q^0p))}{(p^0+q^0)^2-(w\cdot[p+q])^2}.
 $$ 
 These post-collisional coordinates are the relativistic analog of the following Newtonian post-collisional variables
 $$
 p' = p -((p-q)\cdot w ) w, \quad  q' = q +((p-q)\cdot w ) w.
 $$
 In this case, the change of variables $(p,q)$ to $(p',q')$ does not really result in any harm because we have the following: 
 \begin{equation}\notag
 \begin{split}\left|\frac{\partial(p,q)}{\partial(p',q')}\right|&=1,\ \text{in the non-relativistic case},\\
 &=\frac{p^0q^0}{p'^0q'^0},\ \text{in the relativistic case \cite{MR1105532}} .
 \end{split}
 \end{equation}

 \item A third approach is to use the Carleman representation of the collision operator; mainly, one derives and uses an \textit{alternative} representation of the collision operator. However, this is still not always easy to follow in the relativistic case as discussed in \cite{Jang2016} and \cite{1907.05784}.
 \end{itemize}

For these approaches, the remaining possible strategies for performing an appropriate change of variables for the relativistic Boltzmann collision operator is  to either consider the representation \eqref{alternativerepresentation}, or to go through deriving an appropriate Calreman-type representation for the collision operator and try to estimate them using this representation.   The variables \eqref{alternativerepresentation} have a disadvantage, as the upper-bound for the Jacobian has huge momentum growth in $p$ and $q$ variables as
$$
|\nabla_q p'_i|+|\nabla_q q'_i|\lesssim (p^0)^5 q^0.
$$ 
Although the growth in the $q$ variable can be treated with a compensating exponential decay in the $q$ variable if one takes the standard symmetric linearization around a relativsitic Maxwellian as in for instance \cite{GS3,GS4,Guo-Strain2,MR2728733}, the growth in the $p$ variable is still problematic; this difficulty was studied in \cite{Guo-Strain2}. The latter method of deriving and using a Carleman-type representation to change variables is also difficult because it contains the estimates on an unbounded non-flat hypersurface, as observed in \cite{Jang2016,1907.05784}.

Therefore, in \cite{Guo-Strain2}, the authors  used the \textit{center-of-momentum} \eqref{p'} representation away from the singular region; they used the fact that the post-collisional variables $p'$ and $q'$ in the \textit{center-of-momentum} representation has its singularity (i.e., the Jacobian of $\left|\frac{\partial^k p'}{\partial^k p}\right|$ vanishes) when $p-q=0$ if $k\geq 1$ and $p+q=0$ if $k\geq 2$. The authors provided an upper-bound estimate for the Jacobian away from the singularities and have shown that the Jacobian does not have a growth in the $p$ variable away from the singularities. More precisely, what they have computed is the bound for the Jacobian in the region away from the singularities, if $|p|\geq 1$ and $|p|^{1/m}>2q^0$ for some integer $m\ge 1$ then they have shown that
$$
|\partial_\beta p'|+|\partial_\beta q'|\lesssim (q^0)^n,
$$ 
for some integer $n\geq 1$ which depends upon $\beta\neq 0$. Here $\partial_\beta$ is the multi-index notation for the derivatives with respect to the $p$ variable as follows: $\beta=[\beta^1,\beta^2,\beta^3]$ 
and
$\partial_\beta=\partial_{p_1}^{\beta^1} \partial_{p_2}^{\beta^2}\partial_{p_3}^{\beta^3}$. So, we can say that the use of the \textit{center-of-momentum} coordinates has its own advantage that it does not show any growth in the $p$ variable away from the singularity; this is explained in Lemma 3 of \cite{Guo-Strain2}. 

In the non-cutoff scenario, when the angular \textit{cutoff} assumption is removed, the situation is even worse as one must utilize the cancellations from the gain and loss operators to implement cancellations of the high angular singularities. Therefore, one must obtain the upper- and the lower- bounds for the Jacobian of the change of variables $\left(p,q\right) \rightarrow \left(u,q\right)$ where $u$ is defined as \eqref{u.split}.  In this situation, we observe numerically in this work below that the the zeros of the Jacobian $\left|\frac{\partial p'}{\partial p}\right|$ (which is a singularity for $\left|\frac{\partial p}{\partial p'}\right|$) occur in large regions depending on all $(\theta, p,q,w)$.

In this paper, we compute the Jacobian determinant in \eqref{det.up} for the change of variables from $p$ to $u$  for the noncutoff Boltzmann theory in the \textit{center-of-momentum} representation \eqref{p'} even on the singular region.  We calculate a very explicit expression for the Jacobian and provide its upper-bound that has no singularity in the $p$ and $q$ variables. This generalizes the work in \cite{Guo-Strain2} away from singularities, and the work in \cite{Jang2016}.  We will further prove that the Jacobian determinant can go to zero in a limit.  And we explain numerical evidence that the Jacobian \eqref{det.up} has a large number of distinct values where it is machine zero to up to two hundred digits of precision.

\subsection{Outline of the paper}
In the next Section \ref{sec:upperBD} we calculate the Jacobian determinant \eqref{det.up} for the the change of variables from $p \mapsto u$ in \eqref{u.split}.  We also prove the upper bound for this Jacobian.  Then in Section \ref{sec:lowerBD} we prove that the lower bound of the Jacobian is zero.  In Section \ref{sec:numerical} we present numerical evidence that the Jacobian determinant has a large number of distinct values which make it machine zero.  Then, lastly, in Appendix \ref{sec:ALTjacobian} we give an alternative expression for the determinant\eqref{det.up} in Proposition \ref{Jaco2}.

%
%
%
%

	\section{The upper-bound of the Jacobian of the collision map}\label{sec:upperBD}
	We consider a pair of relativistic particles with momenta $p$ and $q$ that collide and diverge with post-collisional momenta $p'$ and $q'$. Using the \textit{center-of-momentum} expressions, we can represent the post-collisional variables $p'$ and $q'$ as \eqref{p'} and \eqref{q'}. In this section, we are interested in the Jacobian of the collision map $\left(p,q\right) \rightarrow \left(u,q\right)$ where $u$ is defined as $u\eqdef \theta p'+\left(1-\theta\right)p$ in \eqref{u.split} for some $\theta\in\left(0,1\right)$. The Jacobian will be computed explicitly and it will be shown that the Jacobian is bounded above in the variable $p$ and $q$. 
	
	This section is in particular devoted to estimate the upper-bound of
	$
	\det\left( \frac{\partial u}{\partial p}\right).
	$
Recall that the post-collisional momentum in the center-of-momentum expression is defined as \eqref{p'} 
	where 
	\begin{equation}\label{gamma.calc}
	\gamma-1\eqdef \frac{p^0+q^0-\sqrt{s}}{\sqrt{s}}=
	\frac{(p^0+q^0)^2-s}{\sqrt{s}\left(p^0+q^0+\sqrt{s}\right)}=\frac{|p+q|^2}{\sqrt{s}\left(p^0+q^0+\sqrt{s}\right)}.
\end{equation}
We will use the calculation in \eqref{gamma.calc} rather frequently in the proofs below. Notice also that $\gamma-1\geq0$ from \eqref{gamma.calc}.
	
	We now state our main theorem:
	\begin{theorem}
		\label{Jacobian}
The Jacobian determinant $\det\left( \frac{\partial u}{\partial p}\right)$ is equal to
\begin{equation}\label{det.up}
\det\left( \frac{\partial u}{\partial p}\right)=A^3+P_2A^2+P_3A
\end{equation}
where $A\in\left(1-\theta,1\right)$ is defined as 
\begin{equation}\label{def.A}
A\eqdef \left(1-\frac{\theta}{2}\right)+\frac{\theta}{2}\left(g\frac{\left(\gamma-1\right)\left(p+q\right)\cdot w }{|p+q|^2}\right),
\end{equation}
and $P_2$ and $P_3$ are defined as in \eqref{P2} and \eqref{P3} below.  They satisfy
\begin{equation}\label{def.P2}
|P_2|\lesssim (q^0)^{\frac{3}{2}}\left(1+\frac{\sqrt{p^0}}{s}\right), 
\end{equation}
and
\begin{equation}\label{def.P3}
	|P_3|\lesssim \frac{q^0}{s}.
\end{equation}
	\end{theorem}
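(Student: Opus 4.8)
The plan is to write $u$ out explicitly, differentiate in $p$, recognise the Jacobian matrix as $A$ times the identity plus a matrix of rank at most two, and then read off the cubic \eqref{det.up} from the standard formula for such a determinant; only afterwards do I work out the two gradients entering $P_2,P_3$ and estimate them. Substituting \eqref{gamma.calc} into \eqref{p'} and inserting the result into \eqref{u.split} gives
\[
u=\Bigl(1-\tfrac{\theta}{2}\Bigr)p+\tfrac{\theta}{2}q+\tfrac{\theta g}{2}\,w+F\,(p+q),\qquad F=\frac{\theta\,g\,\bigl((p+q)\cdot w\bigr)}{2\sqrt{s}\,\bigl(p^0+q^0+\sqrt{s}\bigr)},
\]
and by \eqref{gamma.calc} together with \eqref{def.A} one has $\bigl(1-\tfrac{\theta}{2}\bigr)+F=A$; moreover $|F|<\tfrac{\theta}{2}$ because $|(p+q)\cdot w|\le|p+q|\le p^0+q^0$ and $g\le\sqrt{s}$, which is the assertion $A\in(1-\theta,1)$. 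Differentiating in $p$ with $q,w$ fixed and using $\partial_{p_j}(p_i+q_i)=\delta_{ij}$, the two terms proportional to $\delta_{ij}$ (from $(1-\tfrac{\theta}{2})p$ and from the product rule on $F(p+q)$) combine into $A\,\delta_{ij}$, so
\[
\frac{\partial u}{\partial p}=A\,I+N,\qquad N=\tfrac{\theta}{2}\,w\otimes\nabla_p g+(p+q)\otimes\nabla_p F,\qquad (a\otimes b)_{ij}=a_ib_j .
\]

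Since $N$ is a sum of two rank-one matrices it has rank at most two, hence $\det N=0$, and in dimension three $\det(A I+N)=A^3+\operatorname{tr}(N)\,A^2+\tfrac12\bigl(\operatorname{tr}(N)^2-\operatorname{tr}(N^2)\bigr)A+\det N$. This is exactly \eqref{det.up} with $P_2=\operatorname{tr}(N)$ and $P_3=\tfrac12\bigl(\operatorname{tr}(N)^2-\operatorname{tr}(N^2)\bigr)$, i.e.
\[
P_2=\tfrac{\theta}{2}\,w\cdot\nabla_p g+(p+q)\cdot\nabla_p F,\qquad P_3=\tfrac{\theta}{2}\Bigl((w\cdot\nabla_p g)\bigl((p+q)\cdot\nabla_p F\bigr)-(w\cdot\nabla_p F)\bigl((p+q)\cdot\nabla_p g\bigr)\Bigr),
\]
which are the expressions \eqref{P2} and \eqref{P3}. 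For the bounds I would first compute the gradients: from \eqref{g2}, $2g\,\nabla_p g=2\bigl(\tfrac{q^0}{p^0}p-q\bigr)$, so $\nabla_p g=\tfrac{1}{g\,p^0}(q^0p-p^0q)$; also $\nabla_p\sqrt{s}=\tfrac{g}{\sqrt{s}}\nabla_p g$ and $\nabla_p p^0=p/p^0$, and logarithmic differentiation of $F$ yields $\nabla_p F=F\,G$ with
\[
G=\Bigl(\frac1g-\frac{g}{s}-\frac{g}{\sqrt{s}\,(p^0+q^0+\sqrt{s})}\Bigr)\nabla_p g+\frac{w}{(p+q)\cdot w}-\frac{p}{p^0\,(p^0+q^0+\sqrt{s})} .
\]

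Substituting $\nabla_p g$ and $\nabla_p F=FG$ into $P_2,P_3$, the strategy is to cancel every apparent singularity and then count powers of $p^0,q^0,s$. The $1/((p+q)\cdot w)$ term is harmless: dotted against $p+q$ it gives $1$, dotted against $w$ it gives $1/((p+q)\cdot w)$, which is multiplied by the factor $(p+q)\cdot w$ inside $F$. The $1/g$ factors are controlled by the algebraic identity $|q^0p-p^0q|^2=p^0q^0g^2-(p^0-q^0)^2$ — which gives $|\nabla_p g|\le\sqrt{q^0/p^0}$ and $|p^0-q^0|\le\sqrt{p^0q^0}\,g$ — together with the simplification $\tfrac1g-\tfrac{g}{s}=\tfrac{4}{gs}$ (from $s-g^2=4$, see \eqref{s}); after these, every surviving $1/g$ is matched by a numerator factor $p^0-q^0$ or $q^0p-p^0q$. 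In $P_3$ one also uses that the terms carrying the scalar coefficient of $\nabla_p g$ in $G$ cancel by the antisymmetric structure, and the identities $(p+q)\cdot(q^0p-p^0q)=\tfrac{s}{2}(p^0-q^0)$ and $(p+q)\cdot p=p^0(p^0+q^0)-\tfrac{s}{2}$. Finally, feeding in $|p+q|^2=(p^0+q^0)^2-s\le(p^0+q^0)^2$, $s\ge4$, $g\le\sqrt{s}$, $\gamma-1\ge0$, and $p^0+q^0+\sqrt{s}\ge p^0+q^0$, and collecting, yields \eqref{def.P2} and \eqref{def.P3}.

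The structural step — the reduction to $A I+N$ with $\operatorname{rank}N\le2$, hence \eqref{det.up} with no constant term — is short and robust. I expect the real obstacle to be the last step: after substituting $\nabla_p F=F\,G$ (several pieces) the antisymmetrised product defining $P_3$ expands into many terms, and one must verify that all the $1/g$ and $1/((p+q)\cdot w)$ singularities genuinely disappear and then extract the \emph{sharp} growth, in particular the factors $1/s$ and $\sqrt{p^0}/s$ rather than cruder polynomial-in-$p^0$ bounds; this is precisely where the identity $\tfrac1g-\tfrac{g}{s}=\tfrac{4}{gs}$ and the use of $|q^0p-p^0q|^2=p^0q^0g^2-(p^0-q^0)^2$ (instead of $|q^0p-p^0q|\lesssim p^0q^0$) are essential.
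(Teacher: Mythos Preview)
Your structural argument is correct and is a genuinely different, and considerably cleaner, route than the paper's. The paper expands $\partial u_i/\partial p_j$ as $A\delta_{ij}$ plus nine tensor terms in $p,q,w$, then passes to the orthonormal basis $\{w,\bar w,\tilde w\}$ and performs several row/column reductions to force the determinant into the cubic form. Your observation that
\[
\frac{\partial u}{\partial p}=A\,I+\tfrac{\theta}{2}\,w\otimes\nabla_p g+(p+q)\otimes\nabla_p F
\]
is $AI$ plus a rank-$\le 2$ matrix short-circuits all of that: $\det N=0$ gives the missing constant term immediately, and $P_2=\operatorname{tr}N$, $P_3=\sigma_2(N)$ fall out of the characteristic polynomial. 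One can check directly (using the paper's relations $D=C$, $E=B$, $G=F$) that the paper's nine-term tensor indeed collapses to your two rank-one pieces, so the two formulas for $P_2,P_3$ agree. What your approach buys is transparency: no coordinates, no basis choice, and the factor $A$ in \eqref{det.up} is explained structurally. What the paper's approach buys is that by the time one reaches the bounds, $P_2$ and $P_3$ are already fully expanded in the scalars $a,b,c,d,e,p^0,q^0,g,s$, so the cancellations needed for the sharp estimates are visible.

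That last point is where your outline has a gap. Your listed tools suffice for \eqref{def.P2}, but they do not by themselves give the sharp \eqref{def.P3}. After the antisymmetric cancellation you note, one has $P_3=\tfrac{\theta}{2}F\,[\nabla_p g,\beta]$ with $\beta=\beta_1+\beta_2$, $\beta_1=w/((p+q)\cdot w)$, $\beta_2=-p/(p^0(p^0+q^0+\sqrt{s}))$. Estimating the $\beta_1$ and $\beta_2$ contributions separately with your inequalities ($|\nabla_p g|\le\sqrt{q^0/p^0}$, $|p^0-q^0|\le\sqrt{p^0q^0}\,g$, etc.) yields only $|P_3|\lesssim q^0/\sqrt{s}$, not $q^0/s$. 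The extra $1/\sqrt{s}$ comes from a genuine cancellation \emph{between} these two pieces: in the paper's language, if $I_3$ denotes the $\beta_1$-bracket and $I_1$ the $\beta_2$-bracket, then $I_1+(p^0+q^0)I_3=(1+p^0q^0-ad-be)(b^2+c^2-e^2)$, which is what allows both pieces to be written over the common prefactor $(\gamma-1)^2/(p^0\sqrt{s}\,|p+q|^4)$ carrying the needed $1/s$. Your sketch flags ``extracting the sharp growth'' as the obstacle, but the specific mechanism---combining the two rank-one contributions rather than bounding them individually---is not among the identities you list, and without it the argument stalls at $q^0/\sqrt{s}$.
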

	Since $A\in \left(1-\theta,1\right)$, we obtain the following corollary on the upper-bound for the derivative of the collision map:
	\begin{corollary}
		The Jacobian determinant $\det\left( \frac{\partial u}{\partial p}\right)$ is bounded above as
		$$
		\left|\det\left( \frac{\partial u}{\partial p}\right)\right|\lesssim (p^0)^{\frac{1}{2}}(q^0)^{\frac{3}{2}}.
		$$
	\end{corollary}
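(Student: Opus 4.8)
The plan is to compute $\det\left(\frac{\partial u}{\partial p}\right)$ directly by writing out the $3\times 3$ matrix $\frac{\partial u_i}{\partial p_j}$ explicitly and then organizing the resulting expression in powers of the distinguished scalar $A$ defined in \eqref{def.A}. First I would differentiate the relation $u = \theta p' + (1-\theta)p$ with $p'$ given by \eqref{p'}. Writing $p' = \frac{p+q}{2} + \frac{g}{2}\left(w + (\gamma-1)(p+q)\frac{(p+q)\cdot w}{|p+q|^2}\right)$, the key ingredients are the partial derivatives $\frac{\partial g}{\partial p_j}$, $\frac{\partial \gamma}{\partial p_j}$ (equivalently $\frac{\partial \sqrt{s}}{\partial p_j}$ via \eqref{gamma.calc}), and $\frac{\partial (p+q)_i}{\partial p_j} = \delta_{ij}$. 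Using $g^2 = 2(p^0 q^0 - p\cdot q - 1)$ and $s = g^2 + 4$, one has $\frac{\partial g}{\partial p_j} = \frac{1}{g}\left(\frac{p_j}{p^0}q^0 - q_j\right)$ and similar for $s$. The upshot is that $\frac{\partial u_i}{\partial p_j}$ decomposes as $A\,\delta_{ij}$ plus a correction matrix $B_{ij}$ coming from the derivatives hitting $g$, $\gamma$, and the rational factor $\frac{(p+q)\cdot w}{|p+q|^2}$; here $A$ is precisely the ``multiplicative'' part where the derivative acts only on the explicit $p$ in $p+q$ and $|p+q|^2$ and leaves the scalar coefficients untouched.

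Once the matrix is in the form $A\,I + B$, I would expand $\det(A I + B) = A^3 + A^2\,\mathrm{tr}(B) + A\cdot\big(\text{sum of principal }2\times 2\text{ minors of }B\big) + \det(B)$. The claim \eqref{det.up} asserts that $\det(B)=0$, which should follow structurally: $B$ is built from outer-product-type terms (each correction is of the form ``vector depending on $p,q,w$'' tensored with ``gradient of a scalar''), so $B$ has rank at most $2$, forcing $\det(B)=0$. I would make this rank bound precise by exhibiting $B$ as a sum of two rank-one matrices (one from differentiating the $g$-prefactor times the bracketed vector, one from differentiating the scalar $(\gamma-1)\frac{(p+q)\cdot w}{|p+q|^2}$ times $(p+q)$), modulo terms proportional to the identity which get absorbed into $A$. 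Then $P_2 \eqdef \mathrm{tr}(B)$ and $P_3 \eqdef$ the second elementary symmetric function of the eigenvalues of $B$, and I would record their explicit formulas as \eqref{P2} and \eqref{P3}.

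For the bounds \eqref{def.P2} and \eqref{def.P3}, the work is to estimate $P_2$ and $P_3$ using the elementary inequalities available in the excerpt: $s \geq 4$, $g \geq 0$, $0 \leq \gamma - 1 = \frac{|p+q|^2}{\sqrt{s}(p^0+q^0+\sqrt{s})}$, together with $g^2 = s - 4 \leq s$, $|p| \leq p^0$, $|p+q| \leq p^0 + q^0$, and the Cauchy–Schwarz bound $|(p+q)\cdot w| \leq |p+q|$. The crucial cancellations are that the singular-looking factor $\frac{1}{|p+q|^2}$ is always tamed: $(\gamma-1)\frac{(p+q)\cdot w}{|p+q|^2}$ has the $|p+q|^2$ cancel against the numerator of $\gamma - 1$, leaving $\frac{(p+q)\cdot w}{\sqrt{s}(p^0+q^0+\sqrt{s})}$, which is bounded by $\frac{1}{\sqrt{s}}$. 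Tracking the worst power of $p^0$ through each term of $P_2$ — the $\frac{p_j}{p^0}q^0 - q_j$ factors from $\frac{\partial g}{\partial p_j}$ and $\frac{\partial s}{\partial p_j}$ contribute $q^0$, combined with $g \sim \sqrt{s}$ prefactors and the $\frac{1}{\sqrt{s}}$ or $\frac{1}{s}$ denominators — should yield the stated $(q^0)^{3/2}(1 + \sqrt{p^0}/s)$ for $P_2$ and $q^0/s$ for $P_3$, the $\sqrt{p^0}$ appearing only in the term where $\frac{\partial g}{\partial p_j}$ or $\frac{\partial \sqrt s}{\partial p_j}$ is divided by $g$ near the singular set and controlled by $\sqrt{p^0 q^0}$ type estimates.

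The main obstacle I anticipate is bookkeeping rather than conceptual: carrying out the differentiation of the triple product $\frac{g}{2}(\gamma-1)\frac{(p+q)\cdot w}{|p+q|^2}(p+q)_i$ cleanly enough to see the rank-$\leq 2$ structure of $B$ and to isolate exactly which pieces belong to $A\,I$ versus $B$. In particular, one must be careful that the derivative of $|p+q|^{-2}$ produces a term proportional to $(p+q)_j$, so that piece is genuinely rank one (vector $(p+q)_i$ times covector $(p+q)_j$) and not a hidden identity contribution. A secondary difficulty is ensuring the estimates on $P_2$ are uniform down to the singular set $p+q=0$ and $p=q$: one has to verify that every apparent singularity ($1/|p+q|^2$, $1/g$) is cancelled or dominated, which is where the identity \eqref{gamma.calc} and the inequality $g^2 = s-4$ do the essential work. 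Given Theorem \ref{Jacobian}, the corollary is then immediate: since $A \in (1-\theta,1) \subset (0,1)$ we have $|A| \leq 1$, hence $|\det(\partial u/\partial p)| \leq |A|^3 + |P_2|\,|A|^2 + |P_3|\,|A| \leq 1 + |P_2| + |P_3| \lesssim (q^0)^{3/2}(1 + \sqrt{p^0}/s) + q^0/s \lesssim (p^0)^{1/2}(q^0)^{3/2}$, using $s \geq 4$ and $p^0, q^0 \geq 1$.
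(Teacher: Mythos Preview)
Your proposal is correct, and the final deduction of the Corollary from Theorem~\ref{Jacobian} (using $|A|\le 1$ and $s\ge 4$) is exactly what the paper does. The interesting difference is in how you would establish the structure \eqref{det.up} itself.

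The paper obtains \eqref{det.up} by brute force: it writes $\frac{\partial u_i}{\partial p_j}$ as $A\delta_{ij}$ plus eight outer-product terms in $p_i,q_i,w_i$, passes to an adapted orthonormal basis $\{w,\bar w,\tilde w\}$, and then performs several rounds of row and column reductions until the determinant visibly factors as $A$ times a quadratic in $A$. Your route is cleaner: writing $p'$ as in \eqref{p''}, one sees directly that
\[
\frac{\partial u_i}{\partial p_j}-A\delta_{ij}
=\tfrac{\theta}{2}\,w_i\,\frac{\partial g}{\partial p_j}
+\tfrac{\theta}{2}\,(p+q)_i\,\frac{\partial}{\partial p_j}\!\left(\frac{g\,(p+q)\cdot w}{\sqrt{s}(\pqs)}\right),
\]
a sum of two rank-one matrices with column space contained in $\mathrm{span}\{w,\,p+q\}$. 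Rank $\le 2$ forces the constant term $\det B=0$, and $P_2=\mathrm{tr}\,B$, $P_3=e_2(B)$ follow from the characteristic-polynomial expansion $\det(AI+B)=A^3+e_1(B)A^2+e_2(B)A+e_3(B)$. This is equivalent to the paper's answer (indeed $\mathrm{tr}\,B$ reproduces \eqref{P2representation} on the nose) but bypasses the basis change and the long chain of reductions.

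Where your sketch is thin is the estimates \eqref{def.P2}--\eqref{def.P3}. The paper does not just quote the elementary inequalities you list; it first simplifies $P_2$ and $P_3$ to the closed forms \eqref{P2} and \eqref{P3} (with nontrivial algebraic cancellations), and then bounds those term by term, invoking in particular the coercive inequality $g\ge |p-q|/\sqrt{p^0q^0}$ of Lemma~\ref{coersive inequality} to control the $1/g$ factor coming from $\nabla_p g$. Your outline is correct in spirit, but the actual work to reach \eqref{def.P2} is more delicate than the proposal suggests; the $(q^0)^{3/2}\sqrt{p^0}/s$ scaling only emerges after the cancellations in \eqref{P2} are made explicit.
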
 
	
	\begin{remark}
	Here we remark that our estimate on the Jacobian is the first result which does not contain any singularity in $p$ and $q$ variables in the use of the \textit{center-of-momentum} coordinates. 
	A similar work on the relativistic Jacobian has been done by Glassey and Strauss \cite{MR1105532} in 1991 with the use of an alternative representation of the post-collisional momenta \eqref{alternativerepresentation}. More precisely, they proved that 
$$|\nabla_q p'_i|+|\nabla_q q'_i|\lesssim (p^0)^5 q^0.$$
Further, as we discussed in the previous section, the use of the variables \eqref{alternativerepresentation} creates a growth in $q$ variable that can cause severe difficulties. One can remove the growth in $|q|$ by averaging in $w$ variable as $$
	\sum_{i,j}\int_{\mathbb{S}^2}  \left\{ \left|\frac{\partial p'_i}{\partial q_j}\right|+\left|\frac{\partial q'_i}{\partial q_j}\right|\right\}dw \lesssim \left(p^0\right)^5.
	$$   
	Note that the growth in $|p|$ is not removed.
	This is proven in \cite[Theorem 2]{MR1105532}.
	\end{remark}
We will frequently use the following well known coercive inequality for the relative momentum in the center of momentum framework.

\begin{lemma}[Lemma 3.1 (i) on page 316 of \cite{GS3}]\label{coersive inequality}  The relative momentum $g$ satisfies the following inequalities:
\begin{equation}\label{gINEQ}
\frac{|p-q|}{\sqrt{\pZ \qZ }}\leq g(p,q)\leq |p-q|.
\end{equation}
\end{lemma}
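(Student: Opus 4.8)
The plan is to reduce both inequalities to polynomial identities in $p^0$, $q^0$, and $p\cdot q$ by squaring, using the on-shell relations $|p|^2 = (p^0)^2-1$ and $|q|^2 = (q^0)^2-1$ together with the defining formula \eqref{g2}, namely $g^2 = 2\left(p^0 q^0 - \sum_i p^i q^i - 1\right)$, and then exactly one application of Cauchy--Schwarz, $p\cdot q \le |p|\,|q|$. Since $g\ge 0$ and $|p-q|/\sqrt{p^0 q^0}\ge 0$, taking square roots at the end introduces no sign ambiguity.

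First I would expand, via the on-shell relations,
\[
|p-q|^2 = |p|^2 + |q|^2 - 2\,p\cdot q = (p^0)^2 + (q^0)^2 - 2 - 2\,p\cdot q.
\]
The upper bound is then immediate:
\[
g^2 - |p-q|^2 = 2\left(p^0 q^0 - p\cdot q - 1\right) - \Big((p^0)^2 + (q^0)^2 - 2 - 2\,p\cdot q\Big) = -\left(p^0 - q^0\right)^2 \le 0,
\]
so $g \le |p-q|$.

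For the lower bound it suffices to prove $p^0 q^0 g^2 \ge |p-q|^2$. Multiplying out $g^2$ and subtracting, one gets
\[
p^0 q^0 g^2 - |p-q|^2 = 2(p^0 q^0)^2 - 2 p^0 q^0 - (p^0)^2 - (q^0)^2 + 2 - 2\,(p^0 q^0 - 1)\,(p\cdot q).
\]
Since $p^0, q^0 \ge 1$ gives $p^0 q^0 - 1 \ge 0$, the coefficient of $p\cdot q$ here is nonpositive, so by Cauchy--Schwarz ($p\cdot q \le |p|\,|q| = \sqrt{((p^0)^2-1)((q^0)^2-1)}$) the right-hand side is bounded below by the same expression with $p\cdot q$ replaced by $|p|\,|q|$. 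The key algebraic step is then to recognize that this lower bound is a perfect square:
\[
2(p^0 q^0)^2 - 2 p^0 q^0 - (p^0)^2 - (q^0)^2 + 2 - 2\,(p^0 q^0 - 1)\,|p|\,|q| = \left(p^0 q^0 - 1 - |p|\,|q|\right)^2 \ge 0,
\]
which one verifies by expanding the right side and using $|p|^2|q|^2 = \big((p^0)^2-1\big)\big((q^0)^2-1\big)$. This yields $p^0 q^0 g^2 \ge |p-q|^2$, hence the claimed lower bound after dividing by $p^0 q^0 > 0$ and taking square roots.

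The only mildly nonroutine point — and where I would expect to spend a moment — is discovering the perfect-square form above; it can be motivated by the substitution $p^0 = \cosh x$, $q^0 = \cosh y$, under which $|p|\,|q| = \sinh x\,\sinh y$ and the bracketed quantity collapses to $\left(\cosh(x-y) - 1\right)^2$. Everything else is elementary algebra, so I do not anticipate a real obstacle.
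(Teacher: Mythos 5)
Your proof is correct and complete. Note that the paper does not prove this lemma itself: it quotes it from \cite{GS3} (Lemma 3.1(i)) and points to \cite{1903.05301} for a proof, so there is no in-paper argument to compare against. Your upper bound is the standard identity $g^2=|p-q|^2-(p^0-q^0)^2$. For the lower bound the cited proofs typically also start from that identity and estimate $(p^0-q^0)^2=\bigl((|p|^2-|q|^2)/(p^0+q^0)\bigr)^2$ via Cauchy--Schwarz; your variant instead clears the denominator and exhibits $p^0q^0g^2-|p-q|^2$, after the single substitution $p\cdot q\le |p|\,|q|$ (legitimate since the coefficient $-2(p^0q^0-1)$ of $p\cdot q$ is nonpositive), as the perfect square $\left(p^0q^0-1-|p|\,|q|\right)^2$. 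I verified the algebra: $(p^0q^0-1)^2+|p|^2|q|^2=2(p^0q^0)^2-2p^0q^0-(p^0)^2-(q^0)^2+2$ using $|p|^2|q|^2=\bigl((p^0)^2-1\bigr)\bigl((q^0)^2-1\bigr)$, which is exactly the required expression. The argument proves the inequality with the normalization of $g$ in \eqref{g2}, i.e.\ as the lemma is stated in this paper (recall the remark that the $g$ of \cite{GS3} is half of the $g$ here), so no constant adjustment is needed.
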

We remark that in \cite{GS3}, the notation $g$ is used for $\frac{1}{2}g$ from \eqref{g2}; this would change the constant in the upper and lower bound of \eqref{gINEQ} by two.  A proof of  \eqref{gINEQ} can also be found in \cite[Proposition 3.1]{1903.05301}.

We now give a brief outline of the proof of Theorem \ref{Jacobian}.    We first take a derivative of $u_i$ in \eqref{u.split} with respect to $p_j$ and decompose the derivative $\frac{\partial u_i}{\partial p_j}$ into a linear combination of the elements of the tensor product of $(p_i,q_i,w_i)^\top$ with $(p_j,q_j,w_j)^\top$. In order to obtain the Jacobian determinant, we define the orthonormal basis $\{w,\bar{w},\tilde{w}\}$ of $\mathbb{R}^3$ and further represent the derivative $\frac{\partial u_i}{\partial p_j}$ as a linear combination of the elements of the tensor product of $(w_i,\bar{w}_i,\tilde{w}_i)^\top$ and $(w_j,\bar{w}_j,\tilde{w}_j)^\top$.  The main difficulty in this proof is to choose carefully several very complicated row and column reductions.  After that we are able to represent the Jacobian determinant as a cubic polynomial with respect to the quantity $A$ in \eqref{det.up}. Then we estimate the upper-bounds for each coefficient of the polynomial, and we further use those to obtain the upper-bounds of whole the Jacobian determinant.

	\begin{proof}[Proof for Theorem \ref{Jacobian}]
		The post-collisional momenta in the \textit{center-of-momentum} expression are written as \eqref{p'} and \eqref{q'}.
		We further recall \eqref{gamma.calc},
		so that we also have
		\begin{equation}
			\label{p''}
			p'=\frac{p+q}{2}+\frac{g}{2}\left( w +\left(p+q\right)\frac{\left(p+q\right)\cdot w }{\sqrt{s}(p^0+q^0+\sqrt{s})}\right),
		\end{equation}
			Now we compute the derivative
			$$
			\frac{\partial u_i}{\partial p_j}=\left(1-\theta\right)\delta_{ij}+\theta\frac{\partial p_i'}{\partial p_j},
			$$
			for any choices of $i, j\in\{1,2,3\}.$
			
			 By \eqref{p'} and \eqref{p''} we have
			\begin{align*}
			\frac{\partial p_i'}{\partial p_j}&=\frac{1}{2}\bigg(\delta_{ij}+\frac{\partial g}{\partial p_j} w _i+\frac{\partial g}{\partial p_j}\left(\gamma-1\right)\left(p_i+q_i\right)\frac{\left(p+q\right)\cdot w }{|p+q|^2}\\&+g\frac{\left(p+q\right)\cdot w }{\sqrt{s}\left(p^0+q^0+\sqrt{s}\right)}\delta_{ij}+g\left(p_i+q_i\right)\frac{\partial}{\partial p_j}\left(\frac{\left(p+q\right)\cdot w }{\sqrt{s}\left(p^0+q^0+\sqrt{s}\right)}\right)\bigg).
			\end{align*}
			Then from \eqref{g2} we have that 
			\begin{align*}
			\frac{\partial g}{\partial p_j}
			&=
			\frac{\partial}{\partial p_j}\left(\sqrt{-\left(p^0-q^0\right)^2+|p-q|^2}\right)
			\\
			&=
			\frac{1}{2g}\frac{\partial}{\partial p_j}\left(-\left(p^0-q^0\right)^2+|p-q|^2\right)
			\\
			&=
			\frac{1}{2g}\left(-2\left(p^0-q^0\right)\frac{\partial p^0}{\partial p_j}+2|p-q|\frac{\partial |p-q|}{\partial p_j}\right)
			\\
			&=
			\frac{1}{g}\left(-\left(p^0-q^0\right)\frac{p_j}{p^0}+\left(p_j-q_j\right)\right)
		=
			\frac{1}{g}\left(\frac{q^0}{p^0}p_j-q_j\right).
			\end{align*}
			Also, we have
			\begin{align*}
			&\frac{\partial}{\partial p_j}\left(\frac{\left(p+q\right)\cdot w }{\sqrt{s}\left(p^0+q^0+\sqrt{s}\right)}\right)
			\\
			&=
			\frac{ w _j\left(\sqrt{s}\left(p^0+q^0+\sqrt{s}\right)\right)-\left(p+q\right)\cdot w \frac{\partial}{\partial p_j}\left(\sqrt{s}\left(p^0+q^0+\sqrt{s}\right)\right)}{s\left(p^0+q^0+\sqrt{s}\right)^2}.
			\end{align*}
			Note that we have
			\begin{align*}
			\frac{\partial \sqrt{s}}{\partial p_j}
			&=
			\frac{\partial}{\partial p_j}\left(\sqrt{\left(p^0+q^0\right)^2-|p+q|^2}\right)
			\\
		&=
			\frac{1}{2\sqrt{s}}\frac{\partial}{\partial p_j}\left(\left(p^0+q^0\right)^2-|p+q|^2\right)
			\\
			&=
			\frac{1}{2\sqrt{s}}\left(2\left(p^0+q^0\right)\frac{\partial p^0}{\partial p_j}-2|p+q|\frac{\partial |p+q|}{\partial p_j}\right)
			\\
			&=
			\frac{1}{\sqrt{s}}\left(\left(p^0+q^0\right)\frac{p_j}{p^0}-\left(p_j+q_j\right)\right)
			=
			\frac{1}{\sqrt{s}}\left(\frac{q^0}{p^0}p_j-q_j\right).
			\end{align*}
			Then we obtain that
			\begin{align*}
			\frac{\partial}{\partial p_j}\left(\sqrt{s}\left(p^0+q^0+\sqrt{s}\right)\right)
			&=
			\frac{\partial \sqrt{s}}{\partial p_j}\left(p^0+q^0+\sqrt{s}\right)+\sqrt{s}\left(\frac{\partial p^0}{\partial p_j}+\frac{\partial \sqrt{s}}{\partial p_j}\right)
			\\
			&=\left(\frac{q^0}{p^0}p_j-q_j\right)\frac{p^0+q^0+2\sqrt{s}}{\sqrt{s}}+\frac{\sqrt{s}}{p^0}p_j.
			\end{align*}
			Therefore, combining the calculations above we have 
			\begin{align*}
			\frac{\partial p_i'}{\partial p_j}&=\frac{1}{2}\bigg(\delta_{ij}+\frac{1}{g}\left(\frac{q^0}{p^0}p_j-q_j\right)\left(w_i+(\gamma-1)\left(p_i+q_i\right)\frac{\left(p+q\right)\cdot w }{|p+q|^2}\right)\\&+g\frac{\left(p+q\right)\cdot w }{\sqrt{s}\left(p^0+q^0+\sqrt{s}\right)}\delta_{ij}+g\left(p_i+q_i\right)\frac{\partial}{\partial p_j}\left(\frac{\left(p+q\right)\cdot w }{\sqrt{s}\left(p^0+q^0+\sqrt{s}\right)}\right)\bigg)\\
			&=\frac{1}{2}\bigg(\delta_{ij}+\frac{1}{g}\left(\frac{q^0}{p^0}p_j-q_j\right)\left(w_i+(\gamma-1)\left(p_i+q_i\right)\frac{\left(p+q\right)\cdot w }{|p+q|^2}\right)\\&+g\frac{\left(p+q\right)\cdot w }{\sqrt{s}\left(p^0+q^0+\sqrt{s}\right)}\delta_{ij}
		\\	&+g\left(p_i+q_i\right)\frac{ w _j\left(\sqrt{s}\left(p^0+q^0+\sqrt{s}\right)\right)-\left(p+q\right)\cdot w \frac{\partial}{\partial p_j}\left(\sqrt{s}\left(p^0+q^0+\sqrt{s}\right)\right)}{s\left(p^0+q^0+\sqrt{s}\right)^2}
		\bigg)\\	&=
			\left(\frac{1}{2}+\frac{1}{2}g\frac{\left(p+q\right)\cdot w }{\sqrt{s}\left(p^0+q^0+\sqrt{s}\right)}\right)\delta_{ij}
			\\
			&\quad+
			\frac{1}{2g}\left(\frac{q^0}{p^0}p_j-q_j\right)\left( w _i+\left(\gamma-1\right)\left(p_i+q_i\right)\frac{\left(p+q\right)\cdot w }{|p+q|^2}\right)
			\\
		&\quad	+
			\frac{1}{2}g\left(p_i+q_i\right)\left(\frac{ w _j}{\sqrt{s}\left(p^0+q^0+\sqrt{s}\right)}\right)
			\\
			&\quad -
			\frac{\left(p+q\right)\cdot w }{2s\left(p^0+q^0+\sqrt{s}\right)^2}g\left(p_i+q_i\right)\left(\frac{p^0+q^0+2\sqrt{s}}{\sqrt{s}}\left(\frac{q^0}{p^0}p_j-q_j\right)+\frac{\sqrt{s}}{p^0}p_j\right).
			\end{align*}
			Therefore, the terms that contain $p_ip_j$ in the representation above are 
			$$ \frac{1}{2g}\left(\frac{q^0}{p^0}p_j\right)\left(\left(\gamma-1\right)\left(p_i\right)\frac{\left(p+q\right)\cdot w }{|p+q|^2}\right)$$and 
			$$-\frac{\left(p+q\right)\cdot w }{2s\left(p^0+q^0+\sqrt{s}\right)^2}g\left(p_i\right)\left(\frac{p^0+q^0+2\sqrt{s}}{\sqrt{s}}\left(\frac{q^0}{p^0}p_j\right)+\frac{\sqrt{s}}{p^0}p_j\right).$$
			Therefore, the sum of them are equal to 
			\begin{multline*}\frac{1}{2g}\left(\frac{q^0}{p^0}p_j\right)\left(\left(\gamma-1\right)\left(p_i\right)\frac{\left(p+q\right)\cdot w }{|p+q|^2}\right)\\-\frac{\left(p+q\right)\cdot w }{2s\left(p^0+q^0+\sqrt{s}\right)^2}g\left(p_i\right)\left(\frac{p^0+q^0+2\sqrt{s}}{\sqrt{s}}\left(\frac{q^0}{p^0}p_j\right)+\frac{\sqrt{s}}{p^0}p_j\right)\\
			=\frac{1}{2g}\left(\frac{q^0}{p^0}p_j\right)\left(\left(p_i\right)\frac{\left(p+q\right)\cdot w }{\sqrt{s}(p^0+q^0+\sqrt{s})}\right)\\-\frac{\left(p+q\right)\cdot w }{2s\left(p^0+q^0+\sqrt{s}\right)^2}g\left(p_i\right)\left(\frac{p^0+q^0+2\sqrt{s}}{\sqrt{s}}\left(\frac{q^0}{p^0}p_j\right)+\frac{\sqrt{s}}{p^0}p_j\right)\\
			=\frac{p_ip_j\left(p+q\right)\cdot w }{2gp^0\left(p^0+q^0+\sqrt{s}\right)^2s^{\frac{3}{2}}}\bigg(q^0s(p^0+q^0+\sqrt{s})-g^2(p^0+q^0+2\sqrt{s})q^0-sg^2\bigg).
			\end{multline*}
		On the other hand, the terms that contain $q_iq_j$ are 
		$$ \frac{1}{2g}\left(-q_j\right)\left(\left(\gamma-1\right)\left(q_i\right)\frac{\left(p+q\right)\cdot w }{|p+q|^2}\right)$$and 
			$$-\frac{\left(p+q\right)\cdot w }{2s\left(p^0+q^0+\sqrt{s}\right)^2}g\left(q_i\right)\left(\frac{p^0+q^0+2\sqrt{s}}{\sqrt{s}}\left(-q_j\right)\right).$$
			Therefore, the sum of them are equal to 
			\begin{multline*}\frac{1}{2g}\left(-q_j\right)\left(\left(\gamma-1\right)\left(q_i\right)\frac{\left(p+q\right)\cdot w }{|p+q|^2}\right)\\-\frac{\left(p+q\right)\cdot w }{2s\left(p^0+q^0+\sqrt{s}\right)^2}g\left(q_i\right)\left(\frac{p^0+q^0+2\sqrt{s}}{\sqrt{s}}\left(-q_j\right)\right)\\
			=\frac{1}{2g}\left(-q_j\right)\left(\left(q_i\right)\frac{\left(p+q\right)\cdot w }{\sqrt{s}(p^0+q^0+\sqrt{s})}\right)\\-\frac{\left(p+q\right)\cdot w }{2s\left(p^0+q^0+\sqrt{s}\right)^2}g\left(q_i\right)\left(\frac{p^0+q^0+2\sqrt{s}}{\sqrt{s}}\left(-q_j\right)\right)\\
			=\frac{q_iq_j\left(p+q\right)\cdot w }{2g\left(p^0+q^0+\sqrt{s}\right)^2s^{\frac{3}{2}}}\bigg(-s(p^0+q^0+\sqrt{s})+g^2(p^0+q^0+2\sqrt{s})\bigg).
			\end{multline*}
		In addition, note that the coefficients of $p_ip_j$ and $q_ip_j$ are equal. Similarly, the coefficients of $p_iq_j$ and $q_iq_j$ are equal.  
			
			Putting all these together, we can write:
			$$
			\frac{\partial u_i}{\partial p_j}=A\delta_{ij}+Bp_ip_j+Cq_iq_j+Dp_iq_j+Eq_ip_j+Fp_i w _j+Gq_i w _j+H w _ip_j+I w _iq_j,
			$$
			where the scalars are \eqref{def.A} and
			\begin{align}
			B&=\theta\frac{\left(p+q\right)\cdot w }{2gp^0\left(p^0+q^0+\sqrt{s}\right)^2s^{\frac{3}{2}}}\left(q^0s\left(p^0+q^0+\sqrt{s}\right)-g^2q^0\left(p^0+q^0+2\sqrt{s}\right)-g^2s\right), \notag
			\\
			C&=\theta\frac{\left(p+q\right)\cdot w }{2g\left(p^0+q^0+\sqrt{s}\right)^2s^{\frac{3}{2}}}\left(-s\left(p^0+q^0+\sqrt{s}\right)+g^2\left(p^0+q^0+2\sqrt{s}\right)\right), \notag
			\\
			D&=C, \notag
			\\  E&=B, \notag \\
			F&=\theta\frac{g}{2\sqrt{s}\left(p^0+q^0+\sqrt{s}\right)}, \notag \\  G&=F, \notag \\
			H&=\theta\frac{q^0}{2gp^0}, \label{scalars.H}
			\\  I&=-\frac{\theta}{2g}. \notag
			\end{align}
			We will use these notations above throughout the proof.

Notice from \eqref{gamma.calc} and \eqref{def.A} that $\left(1-\theta\right)<A<1$ since
\begin{equation}\label{ineq.A}
\left|g\frac{\left(\gamma-1\right)\left(p+q\right)\cdot w }{|p+q|^2}\right|
=
\left|\frac{g\left(p+q\right)\cdot w }{\sqrt{s}\left(p^0+q^0+\sqrt{s}\right)}\right|
\le \frac{g}{\sqrt{s}}\frac{|p+q|}{p^0+q^0}<1,
\end{equation}
as $s=g^2+4>g^2$ and $|p+q|<p^0+q^0$.
	We will use this to compute the determinant of the matrix $\mathbf{\Phi}=\left(\Phi_{ij}\right)$ where $\Phi_{ij}=\frac{\partial u_i}{\partial p_j}$. 

	We first decompose the pre-collisional vector $p$ as below:
	$$
	p=\left(p\cdot  w \right) w  + w  \times\left(p\times w \right).
	$$
	Define $\bar{ w }\eqdef \frac{ w  \times\left(p\times w \right)}{|p\times w |}$.
	Then, $\bar{ w }\in \mathbb{S}^2$ and $\bar{ w }\perp w $.  Also, define $\tilde{ w }\eqdef \frac{p\times  w }{|p\times  w |}$. 
	
	Then, $\tilde{ w }\in \mathbb{S}^2$ and $\tilde{ w }\perp w $ and $\tilde{ w }\perp\bar{ w }$.
	Thus, $\{ w ,\bar{ w },\tilde{ w }\}$ is an orthonormal basis for $\mathbb{R}^3$. Then, we can decompose $q$ as below:
	\begin{align*}
	q&=\left(q\cdot w \right) w + w \times\left(q\times  w \right)
	\\
	&=\left(q\cdot w \right) w +\left(\left( w \times\left(q\times  w \right)\right)\cdot\bar{ w }\right)\bar{ w }+\bar{ w }\times\left(\left( w \times\left(q\times  w \right)\right)\times\bar{ w }\right)
	\\
	&=\left(q\cdot  w \right) w +\left(q\cdot \bar{ w }\right)\bar{ w }+\left(q\cdot\tilde{ w }\right)\tilde{ w }
	\\
	&\eqdef a w  +b\bar{ w }+c\tilde{ w }.
	\end{align*}
	Here we record
	\begin{equation}\label{def.abc}
		a=\left(q\cdot w \right), \quad b = \left(q\cdot \bar{ w }\right), \quad c = \left(q\cdot\tilde{ w }\right).
	\end{equation}
	Similarly, write:
	\begin{align*}
	p=\left(p\cdot  w \right) w  + w  \times\left(p\times w \right)=\left(p\cdot w \right) w +|p\times w |\bar{ w }\eqdef d w +e\bar{ w }.
	\end{align*}
	Here we also record
	\begin{equation}\label{def.de}
		d=\left(p\cdot w \right), \quad e = |p\times w |.
	\end{equation}
	Notice that $a^2+b^2+c^2=|q|^2$ and $d^2+e^2=|p|^2$.

Then, we can rewrite the matrix element $\Phi_{ij}$:
		\begin{align*}
			\Phi_{ij}&=A\delta_{ij}+B' w _i w _j+C'\bar{ w }_i\bar{ w }_j+D'\tilde{ w }_i\tilde{ w }_j+E' w _i\bar{ w }_j+F' w _i\tilde{ w }_j\\
			&\hspace{10mm}+G'\bar{ w }_i w _j+H'\bar{ w }_i\tilde{ w }_j+I'\tilde{ w }_i w _j+J'\tilde{ w }_i\bar{ w }_j,
		\end{align*}
		where
		\begin{align*}
		B'&=Bd^2+Ca^2+Dad+Ead+Fd+Ga+Hd+Ia
		\\
		C'&=Be^2+Cb^2+Deb+Eeb,
		\\
		D'&=Cc^2
		\\
		E'&=Bde+Cab+Dbd+Eae+He+Ib,
		\\
		F'&=Cac+Dcd+Ic
		\\
		G'&=Bde+Cba+Dae+Ebd+Fe+Gb,
		\\
		H'&=Cbc+Dce
		\\
		I'&=Cac+Ecd+Gc,
		\\ 
		J'&=Cbc+Ece.
		\end{align*}
Therefore, we have
	\begin{multline}\label{matrixPhi}\mathbf{\Phi} = AI+B' w w^\top+C'\bar{ w } \bar{ w }^\top+D'\tilde{ w } \tilde{ w }^\top+E' w  \bar{ w }^\top+F' w  \tilde{ w }^\top\\
			\hspace{10mm}+G'\bar{ w }  w ^\top+H'\bar{ w } \tilde{ w }^\top+I'\tilde{ w }  w ^\top+J'\tilde{ w } \bar{ w }^\top.\end{multline} Note that $\{ w ,\bar{ w },\tilde{ w }\}$ forms an orthonormal basis for $\mathbb{R}^3$. Now we define a 3 by 3 matrix $M$ of the orthonormal basis as 
	$$M=\left( \begin{array}{ccc}
		w& \bar{w} & \tilde{w}\end{array} \right)=\left( \begin{array}{ccc}
		w_1& \bar{w}_1 & \tilde{w}_1 \\
		w_2& \bar{w}_2 & \tilde{w}_2\\
		w_3& \bar{w}_3 & \tilde{w}_3\end{array} \right).$$
		Then the determinant of $\mathbf{\Phi} $ is the same as that of $M^\top \mathbf{\Phi}  M$ as $M$ is the matrix of an orthonormal basis. Now we observe that
		$$(M^\top \mathbf{\Phi}  M)_{11}= w^\top \mathbf{\Phi}  w = A+B',
		$$
		$$(M^\top \mathbf{\Phi}  M)_{21}= \bar{w}^\top \mathbf{\Phi}  w = G',
		$$
		$$(M^\top \mathbf{\Phi}  M)_{31}= \tilde{w}^\top \mathbf{\Phi}  w = I',
		$$
		$$(M^\top \mathbf{\Phi}  M)_{12}= w^\top \mathbf{\Phi}  \bar{w} = E',
		$$
		$$(M^\top \mathbf{\Phi}  M)_{22}= \bar{w}^\top \mathbf{\Phi}  \bar{w} = A+C',
		$$
		$$(M^\top \mathbf{\Phi}  M)_{32}= \tilde{w}^\top \mathbf{\Phi}  \bar{w} = J',
		$$
		$$(M^\top \mathbf{\Phi}  M)_{13}= w^\top \mathbf{\Phi}  \tilde{w} = F',
		$$
		$$(M^\top \mathbf{\Phi}  M)_{23}= \bar{w}^\top \mathbf{\Phi}  \tilde{w} = H',
		$$and 
		$$(M^\top \mathbf{\Phi}  M)_{33}= \tilde{w}^\top \mathbf{\Phi}  \tilde{w} = A+D'.
		$$
Therefore, the determinant of $\mathbf{\Phi}$ is equal to:
		$$
		\det\left(\mathbf{\Phi}\right)=\det\left(M^\top \mathbf{\Phi}  M\right)=
		 \left| \begin{array}{ccc}
		A+B' & E' & F' \\
		G' & A+C' & H' \\
		I' & J' & A+D' \end{array} \right|.
		$$
We will further row reduce this determinant to obtain the expression in \eqref{det.up}.  We write the matrix $\mathbf{\Phi} = \left(\mathbf{\Phi}_{ij}\right)$ to represent the components.

	Subtracting (Column 3)$\times\frac{a}{c}$ from (Column 1) and subtracting (Column 3)$\times\frac{b}{c}$ from (Column 2) gives 
	\begin{align*}
	\Phi_{11}&=A+Bd^2+Ead+Fd+Ga+Hd,
	\\
	\Phi_{21}&=Bde+Ebd+Fe+Gb,
	\\ 
	\Phi_{31}&=Ecd+Gc-\frac{a}{c}A
	\\
	\Phi_{12}&=Bde+Eae+He,
	\\
	\Phi_{22}&=A+Be^2+Ebe,
	\\
	\Phi_{32}&=Ece-\frac{b}{c}A.
	\end{align*}
	There is no change on Column 3 by this column reduction. These row reductions do not change the determinant. 

	Now, subtracting (Column 2)$\times \frac{d}{e}$ from (Column 1) gives
	\begin{gather*}
	\Phi_{11}=A+Fd+Ga,\hspace{2mm}
	\Phi_{21}=-\frac{d}{e}A+Fe+Gb,\hspace{2mm}
	\Phi_{31}=\left(\frac{bd}{ce}-\frac{a}{c}\right)A+Gc.
	\end{gather*}
	Now, we subtract (Row 3)$\times \frac{a}{c}$ from (Row 1) and (Row 3)$\times \frac{b}{c}$ from (Row 2) respectively. Then, we have the matrix elements to be:
	\begin{align*}
	\Phi_{11}&=\left(1-\frac{abd}{c^2e}+\frac{a^2}{c^2}\right)A+Fd,
	\\ 
	\Phi_{21}&=\left(\frac{ab}{c^2}-\frac{d}{e}-\frac{b^2d}{c^2e}\right)A+Fe
	\\
	\Phi_{31}&=\left(\frac{bd}{ce}-\frac{a}{c}\right)A+Gc,
	\\
	\Phi_{12}&=\frac{ab}{c^2}A+Bde+He,
	\\
	\Phi_{22}&=\left(1+\frac{b^2}{c^2}\right)A+Be^2
	\\
	\Phi_{32}&=-\frac{b}{c}A+Ece,
	\\
	\Phi_{13}&=-\frac{a}{c}A+Dcd+Ic,
	\\
	\Phi_{23}&=-\frac{b}{c}A+Dce,
	\\ 
	\Phi_{33}&=A+Cc^2.
	\end{align*}
	We do one more row reduction: subtract (Row 2)$\times\frac{d}{e}$ from (Row 1). This gives
$$
\det\left(\mathbf{\Phi}\right)=
	\left| \begin{array}{ccc}
	\aoo A & \aot A+He & \aox A+Ic \\
	\ato A+Fe & \att A+Be^2 & \atx A+Dce \\
	\axo A+Gc & \axt A+Ece & \axx A+Cc^2 \end{array} \right|
$$
where 
\begin{align*}
	\aoo&=1-\frac{abd}{c^2e}+\frac{a^2}{c^2}-\frac{abd}{c^2e}+\frac{d^2}{e^2}+\frac{b^2d^2}{c^2e^2}
	\\
	\ato&=\frac{ab}{c^2}-\frac{d}{e}-\frac{b^2d}{c^2e},
	\\ 
	\axo&=\frac{bd}{ce}-\frac{a}{c},
	\\ 
	\aot&=\frac{ab}{c^2}-\frac{d}{e}-\frac{b^2d}{c^2e}
	\\
	\att&=1+\frac{b^2}{c^2},
	\\ 
	\axt&=-\frac{b}{c},
	\\ 
	\aox&=\frac{bd}{ce}-\frac{a}{c},
	\\ 
	\atx&=-\frac{b}{c},
	\\
	\axx&=1.
\end{align*}
	Since $B=E$, $C=D$, and $G=F$, we can do one more row reduction: (Row 2)-(Row 3)$\times \frac{e}{c}$. This gives
	\begin{align*}
	\Phi_{21}&=\left(\frac{ab}{c^2}-\frac{d}{e}-\frac{b^2d}{c^2e}-\frac{bd}{c^2}+\frac{ae}{c^2}\right)A\eqdef \ato'A
	\\
	\Phi_{22}&=\left(1+\frac{b^2}{c^2}+\frac{be}{c^2}\right)A\eqdef \att'A,
	\\ 
	\Phi_{23}&=\left(-\frac{b}{c}-\frac{e}{c}\right)A\eqdef \atx'A.
	\end{align*}  
	Finally, we have 
$$
	\det\left(\mathbf{\Phi}\right)=
	 \left| \begin{array}{ccc}
	\aoo A & \aot A+He & \aox A+Ic \\
	\ato' A & \att' A & \atx' A \\
	\axo A+Gc & \axt A+Ece & \axx A+Cc^2 \end{array} \right|,$$
	where
	\begin{align*}
	\aoo&=\frac{c^2|p|^2+L^2}{c^2e^2},
	\\ 
	\ato'&=\frac{L\left(b+e\right)-c^2d}{c^2e},
	\\ 
	\axo&=-\frac{L}{ce},
	\\
	\aot&=\frac{bL-c^2d}{c^2e},
	\\ 
	\att'&=\frac{b^2+c^2+be}{c^2},
	\\ 
	\axt&=-\frac{b}{c},
	\\
	\aox&=-\frac{L}{ce},
	\\ 
	\atx'&=-\frac{b+e}{c},
	\\ 
	\axx&=1,
	\end{align*}
	with $L\eqdef ae-bd$.

	Then the determinant is
	\begin{align*}
	\det\left(\mathbf{\Phi}\right)=A&\bigg(\aoo A\left(\att'A+Cc^2\att'-\atx'\axt A-\atx' Ece\right)\\
	&-\left(\aot A+He\right)\left(\ato' A+\ato' Cc^2-\atx'\axo A-\atx'Gc\right)\\
	&+\left(\aox A+Ic\right)\left(\ato'\axt A+\ato' Ece-\att'\axo A-\att'Gc\right)\bigg).
	\end{align*}
	Here we further reduce the determinant. First, notice that
	$$
	\att'=\frac{b^2+c^2+be}{c^2}=1+\left(-\frac{b}{c}\right)\left(-\frac{b+e}{c}\right)=1+\axt\atx'.
	$$
Thus, we obtain
$$
	\att' A-\atx'\axt A=A.
	$$
	Also, we have
$$
	\ato'-\atx'\axo=\frac{L\left(b+e\right)-c^2d}{c^2e}-\left(\frac{-b-e}{c}\right)\left(\frac{-L}{ce}\right)=-\frac{d}{e},
$$
	and
$$
	\ato'\axt-\att'\axo=\frac{L\left(b+e\right)-c^2d}{c^2e}\left(\frac{-b}{c}\right)-\left(\frac{b^2+be}{c^2}+1\right)\left(\frac{-L}{ce}\right)=\frac{a}{c}.
$$
	Then the determinant is now
	\begin{align*}
	\det\left(\mathbf{\Phi}\right)&=A\bigg(\aoo A\left(A+Cc^2\att'-\atx' Ece\right)\\
	&\hspace*{5mm}-\left(\aot A+He\right)\left(-\frac{d}{e}A+\ato' Cc^2-\atx'Gc\right)\\
	&\hspace*{5mm}+\left(\aox A+Ic\right)\left(\frac{a}{c}A+\ato' Ece-\att'Gc\right)\bigg)\\
	&=\left(\aoo+\aot\frac{d}{e}+\aox\frac{a}{c}\right)A^3\\
	&\hspace*{5mm}+\bigg(\aoo\att'Cc^2-\aoo\atx'Ece+Hd-\aot\ato'Cc^2\\
	&\hspace*{5mm}+\aot\atx'Gc+Ia+\aox\ato'Ece-\aox\att'Gc\bigg)A^2\\
	&\hspace*{5mm}+\left(-\ato'CHc^2e+\atx'GHce+\ato'IEc^2e-\att'IGc^2\right)A.
	\end{align*}
	Thus 
	$$
	\det\left(\mathbf{\Phi}\right) = P_1A^3+P_2A^2+P_3A,
	$$
	where 
\begin{align}
		P_1&=\aoo+\aot\frac{d}{e}+\aox\frac{a}{c}
		\label{def.p1}
		\\
		P_2&=\aoo\att'Cc^2-\aoo\atx'Ece+Hd-\aot\ato'Cc^2
		\label{def.p2}
		\\
		&
		\hspace*{5mm}
		+\aot\atx'Gc+Ia+\aox\ato'Ece-\aox\att'Gc,
		\notag
		\\
		P_3&=-\ato'CHc^2e+\atx'GHce+\ato'IEc^2e-\att'IGc^2.
		\label{def.p3}
\end{align}
We will further simplify and compute $P_1$, $P_2$ and $P_3$ from \eqref{def.p1}, \eqref{def.p2} and \eqref{def.p3} below.

	We compute $P_1$ from \eqref{def.p1} first. It can be simply estimated as
	\begin{align*}
	P_1&=\frac{c^2|p|^2+L^2}{c^2e^2}+\frac{bdL-c^2d^2}{c^2e^2}-\frac{Lae}{c^2e^2}\\
	&=\frac{1}{c^2e^2}\left(c^2e^2+L^2+L\left(bd-ae\right)\right)
	=\frac{1}{c^2e^2}\left(c^2e^2+L^2-L^2\right)=1.
	\end{align*}
	This is all we need for $P_1$.
	
	We now simplify $P_2$ from \eqref{def.p2}. From the previous calculations we have
	\begin{align*}
	P_2=&Cc^2\left(\aoo\att'-\aot\ato'\right)+Ece\left(\aox\ato'-\aoo\atx'\right)\\
	&+Gc\left(\aot\atx'-\aox\att'\right)+\theta\pqhat.
	\end{align*}
	We first have
	\begin{align*}
	&\aoo\att'-\aot\ato'\\
	&=\frac{1}{c^4e^2}\left(\left(c^2|p|^2+L^2\right)\left(b^2+c^2+be\right)-\left(bL-c^2d\right)\left(L\left(e+b\right)-c^2d\right)\right)\\
	&=\frac{1}{c^4e^2}\left(c^4\left(|p|^2-d^2\right)+b^2c^2|p|^2+bc^2\left(|p|^2e+2Ld\right)+c^2\left(L^2+Lde\right)\right)\\
	&=1+\frac{1}{c^2e^2}\left(b^2|p|^2+b\left(|p|^2e+2Ld\right)+L^2+Lde\right)\\
	&=1+\frac{1}{c^2e^2}\left(b^2e^2+|p|^2be+e^2a^2+e^2ad-bd^2e\right)\\
	&=\frac{1}{c^2e^2}\left(|q|^2e^2+be^3+e^2ad\right)
	=\frac{1}{c^2}\left(|q|^2+be+ad\right).
	\end{align*}
	We also have
	\begin{align*}
	&\aox\ato'-\aoo\atx'
	=\frac{1}{c^3e^2}\left(-L^2\left(e+b\right)+Lc^2d+\left(c^2|p|^2+L^2\right)\left(b+e\right)\right)\\
	&=\frac{1}{c^3e^2}\left(Lc^2d+bc^2|p|^2+c^2|p|^2e\right)\\
	&=\frac{1}{c^3e^2}\left(c^2dea-bc^2d^2+bc^2|p|^2+c^2|p|^2e\right)=\frac{1}{ce}\left(ad+be+|p|^2\right).
	\end{align*}
	Lastly, we observe
	\begin{align*}
	\aot\atx'-\aox\att'
	&=\frac{1}{c^3e}\left(\left(bL-c^2d\right)\left(-b-e\right)+L\left(b^2+c^2+be\right)\right)\\
	&=\frac{1}{c^3e}\left(c^2d\left(b+e\right)+Lc^2\right)\\
	&=\frac{1}{c^3e}\left(c^2db+c^2de+c^2ea-c^2bd\right)=\frac{a+d}{c}.
	\end{align*}
	Thus, we put these computations together to obtain
	\begin{multline}\label{P2representation}
	P_2=C\left(ad+be+|q|^2\right)+E\left(ad+be+|p|^2\right)+G\left(a+d\right)\\+\theta\pqhat.
	\end{multline}
	We also have using $s=g^2+4$ and \eqref{gamma.calc} that
	\begin{align*}
	E=B&=\frac{\theta}{p^0}\CE\left(4q^0\left(\pqs\right)-g^2\sqs\left(q^0+\sqs\right)\right)\\
	&=\theta\frac{\left(a+d\right)}{2gp^0}\frac{\left(\gamma-1\right)^2}{|p+q|^4}\left(4q^0\left(\gamma+1\right)-g^2\left(q^0+\sqs\right)\right).
	\end{align*}
	We again use \eqref{gamma.calc} to obtain that
	\begin{align*}
	C=D&=\theta\CE\left(-4\left(\pqs\right)+g^2\sqs\right)\\
	&=\theta\frac{\left(a+d\right)}{2g}\frac{\left(\gamma-1\right)^2}{|p+q|^4}\left(-4\left(\gamma+1\right)+g^2\right),
	\end{align*}
	and further using \eqref{gamma.calc} again
$$
G=F=\theta\frac{g}{2\sqs\left(\pqs\right)}=\theta\frac{\left(\gamma-1\right)g}{2|p+q|^2}.
	$$
Then we reduce $P_2$ as
	\begin{align*}
	P_2=&C\left(ad+be+|q|^2\right)+E\left(ad+be+|p|^2\right)+G\left(a+d\right)+\theta\pqhat\\
	=&\theta\frac{\left(\gamma-1\right)^2\left(p+q\right)\cdot w }{2gp^0|p+q|^4}\bigg(\left(|q|^2+ad+be\right)\left(p^0g^2-4p^0\left(\gamma+1\right)\right)\\
	&\hspace*{20mm}+\left(|p|^2+ad+be\right)\left(4q^0\left(\gamma+1\right)-g^2\left(q^0+\sqs\right)\right)\\
	&\hspace*{20mm}+\frac{|p+q|^2}{\left(\gamma-1\right)}p^0g^2+\frac{|p+q|^4}{\left(\gamma-1\right)^2}\frac{\left(q^0d-p^0a\right)}{a+d}\bigg)\\
	=&\theta\frac{\left(\gamma-1\right)^2\left(p+q\right)\cdot w }{2gp^0|p+q|^4}\bigg(\left(ad+be\right)\left(4\left(q^0-p^0\right)\left(\gamma+1\right)+g^2\left(p^0-q^0\right)-g^2\sqs\right)\\
	&\hspace*{20mm}+\left(4\gamma+4-g^2\right)\left(|p|^2q^0-|q|^2p^0\right)-g^2|p|^2\sqs\\
	&\hspace*{20mm}+\frac{|p+q|^2}{\left(\gamma-1\right)}p^0g^2+\frac{|p+q|^4}{\left(\gamma-1\right)^2}\frac{\left(q^0d-p^0a\right)}{a+d}\bigg)\\
	=&\theta\frac{\left(\gamma-1\right)^2\left(p+q\right)\cdot w }{2gp^0|p+q|^4}\bigg(\left(ad+be\right)\left(\left(q^0-p^0\right)\left(4\gamma+4-g^2\right)-g^2\sqs\right)\\
	&\hspace*{20mm}+\left(4\gamma+4-g^2\right)\left(p^0-q^0\right)\left(1+p^0q^0\right)-g^2|p|^2\sqs\\
	&\hspace*{20mm}+\frac{|p+q|^2}{\left(\gamma-1\right)}p^0g^2+\frac{|p+q|^4}{\left(\gamma-1\right)^2}\frac{\left(q^0d-p^0a\right)}{a+d}\bigg).
	\end{align*}
	We note that $a+d=\left(p+q\right)\cdot w$.
	Thus, we obtain from the above that 
	\begin{equation}
	\label{P2}
\begin{split}
P_2	=&
\theta \frac{\left(\gamma-1\right)^2\left(p+q\right)\cdot w }{2gp^0|p+q|^4}
\left(1+p^0q^0-ad-be\right)\left(p^0-q^0\right)\left(4\gamma+4-g^2\right)
\\
& 
-
\theta \frac{\left(\gamma-1\right)^2\left(p+q\right)\cdot w }{2gp^0|p+q|^4}
\left(ad+be+|p|^2\right)g^2\sqs
\\
&
+
\theta \frac{\left(\gamma-1\right)\left(p+q\right)\cdot w }{2gp^0|p+q|^2}
p^0g^2
+
\theta \frac{\left(p+q\right)\cdot w }{2gp^0}
\frac{\left(q^0d-p^0a\right)}{a+d}.
\end{split}
	\end{equation}
	For an upper-bound estimate for $|P_2|$, we estimate each term in \eqref{P2}. First of all, we note from \eqref{def.abc} and \eqref{def.de} that
	\begin{equation}\label{abcde.bounds}
		|a|,|b|,|c| \lesssim q^0, \quad |d|,|e|\lesssim p^0.
	\end{equation}
	We also have from for instance \eqref{g2} that 
	\begin{equation}\label{gs2.bounds}
		g\leq\sqrt{s}\leq 2\sqrt{p^0q^0}.
	\end{equation}
	Finally, we observe that 
	$$\left|\frac{p^0-q^0}{g}\right|\leq \frac{|p-q|}{g}\leq \sqrt{p^0q^0},$$ 
	which holds by \eqref{gINEQ}. Therefore, we obtain that the first term in \eqref{P2} is bounded above as
	\begin{multline*}
	\theta\left| \frac{\left(\gamma-1\right)^2\left(p+q\right)\cdot w }{2gp^0|p+q|^4}\left(1+p^0q^0-ad-be\right)\left(p^0-q^0\right)\left(4\gamma+4-g^2\right)\right|\\
	=\theta\left| \frac{\left(p+q\right)\cdot w }{2gp^0s(p^0+q^0+\sqrt{s})^2}\left(1+p^0q^0-ad-be\right)\left(p^0-q^0\right)\left(4\gamma+4-g^2\right)\right|\\
	\leq \theta \frac{\sqrt{p^0q^0}\left|p+q\right| }{2p^0s(p^0+q^0+\sqrt{s})^2}\left|1+p^0q^0-ad-be\right|\left|4\gamma+4-g^2\right|\\
	\lesssim   \frac{\sqrt{p^0q^0}\left|p+q\right| }{2p^0s(p^0+q^0+\sqrt{s})^2}(p^0q^0)\max\{p^0+q^0, s\}\lesssim \frac{q^0\sqrt{p^0q^0}}{p^0+q^0}\frac{\max\{p^0+q^0, s\}}{s},\end{multline*}
	where we again used \eqref{gamma.calc} and that 
$$|4\gamma+4-g^2|\le 4\gamma+4+g^2= \frac{4(p^0+q^0)}{\sqrt{s}}+s \le 4 \max\{p^0+q^0,s\}.$$
	\begin{enumerate}
	\item In the case that $\max\{p^0+q^0, s\}=s,$ if $p^0\geq q^0$, we observe that $$\frac{q^0\sqrt{p^0q^0}}{p^0+q^0}\frac{\max\{p^0+q^0, s\}}{s}=\frac{q^0\sqrt{p^0q^0}}{p^0+q^0}\leq \frac{q^0p^0}{p^0+q^0}\leq  q^0.$$ On the other hand, if $q^0\geq p^0$, we observe that $$\frac{q^0\sqrt{p^0q^0}}{p^0+q^0}\frac{\max\{p^0+q^0, s\}}{s}=\frac{q^0\sqrt{p^0q^0}}{p^0+q^0}\leq q^0.$$Therefore, we conclude
\begin{equation}
\label{p21estimate}
	\theta\left| \frac{\left(\gamma-1\right)^2\left(p+q\right)\cdot w }{2gp^0|p+q|^4}\left(1+p^0q^0-ad-be\right)\left(p^0-q^0\right)\left(4\gamma+4-g^2\right)\right|\lesssim q^0.
\end{equation}
\item In the case that $\max\{p^0+q^0, s\}=p^0+q^0,$ we observe that
$$\frac{q^0\sqrt{p^0q^0}}{p^0+q^0}\frac{\max\{p^0+q^0, s\}}{s}= \frac{q^0\sqrt{p^0q^0}}{s}.$$Therefore, we conclude
\begin{multline}
\label{p21estimate2}
	\theta\left| \frac{\left(\gamma-1\right)^2\left(p+q\right)\cdot w }{2gp^0|p+q|^4}\left(1+p^0q^0-ad-be\right)\left(p^0-q^0\right)\left(4\gamma+4-g^2\right)\right|\\ \lesssim \frac{(p^0)^{\frac{1}{2}}(q^0)^{\frac{3}{2}}}{s}.
\end{multline}
\end{enumerate}
We now estimate the second term in the (RHS) of \eqref{P2}. We observe that the second term is bounded above as
\begin{multline}
\label{p22estimate}
	\theta\left| \frac{\left(\gamma-1\right)^2\left(p+q\right)\cdot w }{2gp^0|p+q|^4}\left(ad+be+|p|^2\right)g^2\sqs\right|
	\\
	=\theta\left| \frac{\left(p+q\right)\cdot w }{2gp^0s(p^0+q^0+\sqrt{s})^2}\left(ad+be+|p|^2\right)g^2\sqs\right|\\
	\lesssim \frac{|p+q|(p^0q^0+|p|^2)g}{p^0\sqrt{s}(p^0+q^0)^2}\lesssim 1.
	\end{multline}
We now estimate the third term in the (RHS) of \eqref{P2}. We observe that the third term is bounded above as
\begin{equation}
\label{p23estimate}
	\theta\left| \frac{\left(\gamma-1\right)\left(p+q\right)\cdot w }{2|p+q|^2}g\right|=	\theta\left| \frac{\left(p+q\right)\cdot w }{2\sqrt{s}(p^0+q^0+\sqrt{s})}g\right|\leq \theta.
	\end{equation}
	Finally, using \eqref{def.abc} and \eqref{def.de}, we estimate the last term in the (RHS) of \eqref{P2} as below:
	\begin{multline}
\label{p24estimate}
	\theta\left| \frac{\left(p+q\right)\cdot w }{2gp^0}\frac{q^0d-p^0a}{a+d}\right|=	\theta\left| \frac{ (q^0p-p^0q)\cdot w}{2gp^0}\right|\\  \leq 	\theta\left| \frac{ (q^0p-q^0q)\cdot w}{2gp^0}\right|+\theta\left| \frac{ (q^0q-p^0q)\cdot w}{2gp^0}\right|\\
	\leq \theta\frac{q^0 |p-q|}{2gp^0}+\theta\frac{|q||p^0-q^0|}{2gp^0}\leq \theta \frac{q^0|p-q|}{gp^0}\leq \theta \frac{q^0\sqrt{p^0q^0}}{p^0}=\theta \frac{(q^0)^{3/2}}{(p^0)^{1/2}},
	\end{multline}
	where we used $|p^0-q^0|\leq |p-q|$ and \eqref{gINEQ}.
	Together with \eqref{p21estimate},  \eqref{p21estimate2}, \eqref{p22estimate}, and \eqref{p23estimate}, we have that 
	\begin{equation}
	\label{P2finalestimate}
	|P_2|\lesssim q^0\left(1+\frac{\sqrt{p^0q^0}}{s}\right) + \frac{(q^0)^{\frac{3}{2}}}{(p^0)^{\frac{1}{2}}}\lesssim (q^0)^{\frac{3}{2}}\left(1+\frac{\sqrt{p^0}}{s}\right).
	\end{equation}
	This completes our estimates for $P_2$.
	
	We now simplify $P_3$. Recall from \eqref{def.p3} that
	\begin{equation}\label{P3representation}
	\begin{split}
	P_3&=-\ato'CHc^2e+\atx'GHce+\ato'IEc^2e-\att'IGc^2\\
	&=\ato'c^2e\left(IE-CH\right)+G\left(\atx'Hce-\att'Ic^2\right)\\
	&=\left(abe-bde+ae^2-c^2d-b^2d\right)\left(IE-CH\right)\\&\qquad\qquad\qquad\qquad-G\left(\left(b+e\right)He+I\left(b^2+c^2+be\right)\right).
	\end{split}\end{equation}
	From \eqref{scalars.H} and  \eqref{gamma.calc} we have that
	\begin{align*}
	IE-CH&=-\frac{\theta}{2g}E-\theta\frac{q^0}{2gp^0}C\\
	&=-\frac{\theta}{2g}\left(-C\frac{q^0}{p^0}-\frac{\left(a+d\right)g\sqs}{2p^0}\frac{\left(\gamma-1\right)^2}{|p+q|^4}+C\frac{q^0}{p^0}\right)=\theta^2\frac{\left(a+d\right)\left(\gamma-1\right)^2\sqs}{4p^0|p+q|^4}.
	\end{align*}
Notice from \eqref{scalars.H} that the second term in \eqref{P3representation} is equal to
\begin{multline*}
    -G\left(\left(b+e\right)He+I\left(b^2+c^2+be\right)\right)\\=-\theta\frac{g}{2\sqrt{s}(p^0+q^0+\sqrt{s})}\left(\left(be+e^2\right)\theta \frac{q^0}{2gp^0}-\frac{\theta}{2g}\left(b^2+c^2+be\right)\right)\\
    =\theta^2\frac{1}{4p^0\sqrt{s}(p^0+q^0+\sqrt{s})}\bigg(
	\left(q^0\left(-be-e^2\right)+p^0\left(b^2+c^2+be\right)\right)\bigg)\\
    =\theta^2\frac{\left(\gamma-1\right)}{4p^0|p+q|^2}\bigg(
	\left(q^0\left(-be-e^2\right)+p^0\left(b^2+c^2+be\right)\right)\bigg),
\end{multline*}
using again \eqref{gamma.calc}.
Then we can further reduce $P_3$, using \eqref{gamma.calc}, as
	\begin{align}\label{P3.appendix}
	P_3=&\theta^2\frac{\left(a+d\right)\left(\gamma-1\right)^2\sqs}{4p^0|p+q|^4}\left(abe-bde+ae^2-c^2d-b^2d\right)
	\\
	\notag
	&\hspace*{7mm}+\theta^2\frac{\left(\gamma-1\right)}{4p^0|p+q|^2}\bigg(
	\left(q^0\left(-be-e^2\right)+p^0\left(b^2+c^2+be\right)\right)\bigg)
	\\
	\notag
	=&\theta^2\frac{\left(\gamma-1\right)}{4p^0|p+q|^2}\bigg(\frac{\left(a+d\right)\left(abe-bde+ae^2-c^2d-b^2d\right)}{\pqs}
	\\
	\notag
	&\hspace*{18mm}+be\left(p^0-q^0\right)+p^0\left(|q|^2-a^2\right)-q^0\left(|p|^2-d^2\right)\bigg)
	\\
	\notag
	=&\theta^2\frac{\left(\gamma-1\right)}{4p^0|p+q|^2}\frac{1}{\pqs}\bigg(\left(e^2a-bde+abe-b^2d-c^2d\right)\left(a+d\right)
	\\
	\notag
	&\hspace*{20mm}+\left(\pqs\right)\left(p^0\left(be+|q|^2-a^2\right)+q^0\left(-be-|p|^2+d^2\right)\right)\bigg)
	\\
	\notag
	=&\theta^2\frac{\left(\gamma-1\right)}{4p^0|p+q|^2}\frac{1}{\pqs}\left(I_1+I_2\right).
	\end{align}
	Here, we have
\begin{align*}
I_1&\eqdef \left(e^2a-bde+abe-b^2d-c^2d\right)\left(a+d\right)\\
&=\left(a^2-d^2\right)\left(ad+be\right)+\left(a^2-d^2\right)\left(\left(p^0\right)^2-1\right)+\left(ad+d^2\right)\left(\left(p^0\right)^2-\left(q^0\right)^2\right)\\
&=\left(a^2-d^2\right)\left(ad+be-1\right)+\left(p^0\right)^2\left(a^2+ad\right)-(q^0)^2\left(ad+d^2\right),
\end{align*}
and
$$
I_2\eqdef 
\left(\pqs\right)I_3.
$$
Also
$$
I_3\eqdef p^0\left(be+|q|^2-a^2\right)+q^0\left(-be-|p|^2+d^2\right).
$$
Thus, we have
	\begin{align*}
	I_1+I_2=& \sqs I_3+\left(ad+be\right)\left(a^2-d^2+\left(p^0\right)^2-\left(q^0\right)^2\right)-\left(a^2-d^2\right)\\
	&+\left(p^0\right)^2|q|^2-\left(q^0\right)^2|p|^2+p^0q^0\left(-|p|^2+d^2+|q|^2-a^2\right)\\
	=&\sqs I_3+\left(ad+be-p^0q^0\right)\left(a^2-d^2+\left(p^0\right)^2-\left(q^0\right)^2\right)\\
	&\hspace{10mm}-\left(a^2-d^2\right)+\left(p^0\right)^2|q|^2-\left(q^0\right)^2|p|^2\\
	=&\sqs I_3+\left(1+p^0q^0-ad-be\right)\left(d^2-a^2-\left(p^0\right)^2+\left(q^0\right)^2\right)\\
	=&\sqs I_3+\left(1+p^0q^0-ad-be\right)\left(-e^2+b^2+c^2\right).
	\end{align*}
	Therefore, we finally obtain
\begin{multline}
\label{P3}
	P_3=\theta^2\frac{\left(\gamma-1\right)^2}{4p^0\sqs |p+q|^4}
	\bigg(\sqs\left(\left(be+e^2\right)\left(p^0-q^0\right)\right)
	\\
	+\left(1+p^0q^0-ad-be\right)\left(\sqs p^0-e^2+b^2+c^2\right)\bigg).
\end{multline}
This completes our calculation of $P_3$
	
	We now estimate the upper-bound for $|P_3|$. Recall \eqref{gamma.calc}, 	\eqref{abcde.bounds} and \eqref{gs2.bounds}.
	We further have $s\geq \max\{g^2,4\}.$ Therefore, we have
\begin{multline*}
|P_3|=\theta^2\left|\frac{1}{4p^0s^{3/2}(p^0+q^0+\sqrt{s})^2}\right|\bigg|\sqs\left(\left(be+e^2\right)\left(p^0-q^0\right)\right)\\+\left(1+p^0q^0-ad-be\right)\left(\sqs p^0-e^2+b^2+c^2\right)\bigg|\\
\lesssim \frac{1}{4p^0s^{3/2}(p^0+q^0+\sqrt{s})^2}\bigg|\sqs\left(\left(p^0q^0+(p^0)^2\right)\left(p^0-q^0\right)\right)\\+(p^0q^0)\left(\sqs p^0+|p|^2+|q|^2\right)\bigg|\\
\lesssim  \frac{|p^0-q^0|}{4s(p^0+q^0+\sqrt{s})}+ \frac{q^0((p^0+q^0) p^0+|p|^2+|q|^2)}{4s^{3/2}(p^0+q^0+\sqrt{s})^2}
\lesssim
\frac{1}{s}+\frac{q^0}{s^{3/2}}
\lesssim \frac{q^0}{s}.
\end{multline*}	Therefore,
	 $|P_3|\lesssim \frac{q^0}{s}.$
	This completes the proof.
	\end{proof}
	
	This completes our discussion of the derivation of the Jacobian determinant in \eqref{det.up} and calculating the upper bounds for it in Theorem \ref{Jacobian}.   In the next section we give an example which proves that the Jacobian determinant can become zero.

\section{The lower-bound of the Jacobian of the collision map}\label{sec:lowerBD}
This section is devoted to proving that the the Jacobian determinant det$\left(\frac{\partial u}{\partial p}\right)$ indeed attains the value zero. We prove that in the following theorem:

\begin{theorem}\label{limit.jacobian}
Suppose that the post-collisional momentum $p'$ is defined as \eqref{p'} and $u=\theta p'+(1-\theta)p$ for $\theta \in (0,1)$ as in \eqref{u.split}.  Then we have 
	$$\lim_{\theta \rightarrow 1} \lim_{|q|\rightarrow \infty} \left|\det\left(\frac{\partial u}{\partial p}\right)\right|_{\text{at }p=0 \text{ and } q=-|q|w}=0.$$
\end{theorem}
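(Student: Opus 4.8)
The plan is to sidestep the general cubic formula \eqref{det.up} and instead compute $\det\left(\frac{\partial u}{\partial p}\right)$ directly at the degenerate point $p=0$, $q=-|q|w$, and then pass to the iterated limit. First I would simplify the data there: $p^0=1$ and $p+q=q=-|q|w$, so $(p+q)\cdot w=-|q|$ and $|p+q|=|q|$, while \eqref{g2}, \eqref{s}, \eqref{gamma.calc} give $g=\sqrt{2(q^0-1)}$, $s=2(q^0+1)$ and $\gamma-1=\frac{|q|^2}{\sqrt{s}\,(1+q^0+\sqrt{s})}$. The two relations I would lean on are $|q|^2=(q^0)^2-1$ and, since $g^2=2(q^0-1)$ and $s=2(q^0+1)$, the identity $\frac{g|q|}{\sqrt{s}}=q^0-1$.

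Next I would evaluate $A$ from \eqref{def.A}. By \eqref{gamma.calc} one has $g\,\frac{(\gamma-1)(p+q)\cdot w}{|p+q|^2}=\frac{g(p+q)\cdot w}{\sqrt{s}\,(p^0+q^0+\sqrt{s})}$, so at our point
$$
A=\left(1-\frac{\theta}{2}\right)-\frac{\theta}{2}\cdot\frac{g|q|}{\sqrt{s}\,(1+q^0+\sqrt{s})}=\left(1-\frac{\theta}{2}\right)-\frac{\theta}{2}\cdot\frac{q^0-1}{1+q^0+\sqrt{s}}.
$$
Since $\sqrt{s}=\sqrt{2(q^0+1)}$ satisfies $\sqrt{s}/q^0\to 0$, the fraction tends to $1$ as $|q|\to\infty$, hence $A\to 1-\theta$.

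The structural point is that at $p=0$ the Jacobian is a rank-one perturbation of $A\,\mathrm{Id}$. From the decomposition in the proof of Theorem \ref{Jacobian},
$$
\frac{\partial u_i}{\partial p_j}=A\delta_{ij}+Bp_ip_j+Cq_iq_j+Dp_iq_j+Eq_ip_j+Fp_iw_j+Gq_iw_j+Hw_ip_j+Iw_iq_j,
$$
setting $p=0$ annihilates every term carrying a factor $p_i$ or $p_j$, leaving, at $p=0$, $\frac{\partial u_i}{\partial p_j}=A\delta_{ij}+Cq_iq_j+Gq_iw_j+Iw_iq_j$; and $q=-|q|w$ means $q_i=-|q|w_i$, so $\Phi:=\frac{\partial u}{\partial p}\big|_{p=0}=A\,\mathrm{Id}+\lambda\,w w^{\top}$ with $\lambda:=|q|^2 C-|q|(G+I)$. (The same form also follows from invariance of the configuration under rotations about the $w$-axis.) Since $|w|=1$, this gives $\det\Phi=A^2(A+\lambda)$. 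To close, I would insert the explicit scalars $C,G,I$ from the proof of Theorem \ref{Jacobian} — each carrying a factor $\theta$ — use the relations above, compute $\lambda$, and verify the clean cancellation $A+\lambda=1$, i.e. that the eigenvalue of $\partial u/\partial p$ along $w$ equals $1$ at this configuration. Then $\det\Phi=A^2$, so $\lim_{|q|\to\infty}\det\Phi=(1-\theta)^2$, and hence
$$
\lim_{\theta\to1}\lim_{|q|\to\infty}\left|\det\left(\frac{\partial u}{\partial p}\right)\right|_{p=0,\ q=-|q|w}=\lim_{\theta\to1}(1-\theta)^2=0.
$$

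The only laborious part is this last step: evaluating $\lambda=|q|^2C-|q|(G+I)$ in closed form and confirming $A+\lambda=1$, which is elementary but a bit tedious because of the surds $g=\sqrt{2(q^0-1)}$ and $\sqrt{s}=\sqrt{2(q^0+1)}$. (It would in fact suffice to note that $A+\lambda$ stays bounded as $|q|\to\infty$, which is immediate from Step one.) Alternatively one could substitute $p=0$, $q=-|q|w$ into \eqref{det.up} itself, but then the coefficient $P_2$ of \eqref{P2} enters as a difference of terms that individually grow like $(q^0)^{1/2}$, so its cancellation still has to be tracked by hand — the bound \eqref{def.P2} on its own only gives $|P_2|\lesssim (q^0)^{3/2}$.
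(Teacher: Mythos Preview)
Your proof is correct and takes a genuinely different, cleaner route than the paper. The paper substitutes the specific data into the general cubic formula \eqref{det.up} and computes the limits of $A$, $P_2$, $P_3$ separately, tracking a delicate cancellation of leading-order terms inside $P_2$. You instead exploit the rotational symmetry about $w$: at $p=0$, $q=-|q|w$ the matrix collapses to the rank-one perturbation $A\,\mathrm{Id}+\lambda\,ww^\top$, so $\det\Phi=A^2(A+\lambda)$, and the identity $A+\lambda=1$ is exactly the statement that the directional derivative $w^\top(\partial p'/\partial p)w$ equals $1$ at this configuration. This is straightforward to verify by setting $p=\epsilon w$: then $p'=\tfrac12(\epsilon-|q|+g\gamma)w$, and since $g\gamma=\sqrt{g^2(p^0+q^0)^2/s}$ one gets $g\gamma\big|_{\epsilon=0}=|q|$ and $\tfrac{d}{d\epsilon}(g\gamma)\big|_{\epsilon=0}=1$, hence $\tfrac{d p'}{d\epsilon}\big|_{\epsilon=0}=w$ and $w^\top(\partial u/\partial p)w=\theta\cdot1+(1-\theta)\cdot1=1$. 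Thus $\det\Phi=A^2$ exactly, and the inner limit is $(1-\theta)^2$.

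Two remarks. First, your parenthetical ``it would suffice that $A+\lambda$ stays bounded'' is slightly imprecise: mere boundedness would not guarantee that the inner limit $\lim_{|q|\to\infty}$ exists, only that the $\limsup$ is $O((1-\theta)^2)$; fortunately $A+\lambda=1$ holds identically, so this is moot. Second, note that your value $(1-\theta)^2$ for the inner limit differs from the paper's $(1-\theta)^3+\tfrac{1+\sqrt{2}}{2}\theta(1-\theta)^2$ in \eqref{Jqlim}; yours is the correct one. Indeed, at this configuration one has $b=c=d=e=0$ in the notation of \eqref{def.abc}--\eqref{def.de}, so \eqref{P3.appendix} gives $P_3=0$ identically (not merely in the limit), and \eqref{P2representation} gives $P_2=C|q|^2-G|q|-I|q|=\lambda=1-A\to\theta$. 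The paper's final expression \eqref{P3} and the ensuing $P_2$-limit contain algebraic slips that do not, however, affect the theorem's conclusion, since both expressions vanish as $\theta\to1$.
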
 
\begin{proof}
	We use the formula on the Jacobian determinant from Theorem \ref{Jacobian}. By Theorem \ref{Jacobian}, we have the determinant as \eqref{det.up} with \eqref{def.A}
	and $P_2$ and $P_3$ are defined as in \eqref{P2} and \eqref{P3}, respectively. 
	
	We now compute each value of $A$, $P_2$, and $P_3$ when $p=0$ and $q=-|q|w$. If $p=0$ and $q=-|q|w$, using also \eqref{gamma.calc} we have the following identities:
	\begin{equation}
		\begin{split}
		p^0&=1,\\
		g&=\sqrt{2q^0-2},\\
		s&=2q^0+2,\\
		\gamma&=\frac{1+q^0}{\sqrt{2q^0+2}},\\
		\gamma-1&=\frac{|q|^2}{\sqrt{2q^0+2}(q^0+1+\sqrt{2q^0+2})},\\
		|p+q|&=|q|,\\
		a&=-|q|,\text{ and }\\ b&=c=d=e=0. 
		\end{split}
	\end{equation}
	Then, we can further observe that
	\begin{equation}\label{Alim}A=1-\frac{\theta}{2}-\frac{\theta}{2}\left(\frac{\sqrt{2q^0-2}|q|}{\sqrt{2q^0+2}(q^0+1+\sqrt{2q^0+2})}\right)\rightarrow 1-\theta,\ \text{as}\ |q|\rightarrow \infty. \end{equation} 
We will use this limit at the end of the proof.		
	
Now we study $P_2$, by \eqref{P2}, we have 
	\begin{align}
\notag
	P_2	=&\theta \frac{\left(\gamma-1\right)^2\left(p+q\right)\cdot w }{2gp^0|p+q|^4}\bigg(\left(1+p^0q^0-ad-be\right)\left(p^0-q^0\right)\left(4\gamma+4-g^2\right)\\
	\notag
	&\ -\left(ad+be+|p|^2\right)g^2\sqs\bigg)
	+\theta \frac{\left(\gamma-1\right)g\left(p+q\right)\cdot w }{2|p+q|^2}  +\theta\frac{\left(q^0d-p^0a\right)  }{2gp^0}\\
	\notag
	=&- \frac{\theta|q| }{2\sqrt{2q^0-2}(2q^0+2)(q^0+1+\sqrt{2q^0+2})^2}\\
	\notag 
	&\quad\quad\times\left(\left(1+q^0\right)\left(1-q^0\right)\left(4\frac{1+q^0}{\sqrt{2q^0+2}}-2q^0+6\right)\right)\\
	\notag
	&\ 
	-\theta \frac{\sqrt{2q^0-2}|q| }{2\sqrt{2q^0+2}(q^0+1+\sqrt{2q^0+2})}  +\theta\frac{|q|  }{2\sqrt{2q^0-2}}
	\\
	\notag
	=&- \frac{\theta|q| }{2\sqrt{2q^0-2}(2q^0+2)(q^0+1+\sqrt{2q^0+2})^2}
	\\
	\notag
	&\quad\quad\times\Bigg(\left(1+q^0\right)\left(1-q^0\right)\left(4\frac{1+q^0}{\sqrt{2q^0+2}}-2q^0+6\right)
	\\
	\notag
	&\quad\quad\quad -(2q^0+2)(q^0+1+\sqrt{2q^0+2})^2\Bigg)\\
	\notag
	&\ 	-\theta \frac{\sqrt{2q^0-2}|q| }{2\sqrt{2q^0+2}(q^0+1+\sqrt{2q^0+2})} .
	\end{align}
We continue to further calculte $P_2$ below, and in the last line we further note the important exact cancellation of the highest order $(q^0)^3$ terms, as
	\begin{align}
	\notag	
	P_2=&- \frac{\theta|q| }{2\sqrt{2q^0-2}(2q^0+2)(q^0+1+\sqrt{2q^0+2})^2}\\
	\notag	
	&\quad\quad\times\Bigg(\left(1-(q^0)^2\right)\left(4\frac{1+q^0}{\sqrt{2q^0+2}}+6\right)-2q^0(1-(q^0)^2)\\
	\notag	
	&\quad\quad\quad -2(q^0+1)^3 -(2q^0+2)(2q^0+2+2\sqrt{2q^0+2}(q^0+1))\Bigg)\\
	\notag	
	&\ 	-\theta \frac{\sqrt{2q^0-2}|q| }{2\sqrt{2q^0+2}(q^0+1+\sqrt{2q^0+2})} \\	
	\notag	
		=&- \frac{\theta|q| }{2\sqrt{2q^0-2}(2q^0+2)(q^0+1+\sqrt{2q^0+2})^2}\\
		\notag	
		&\quad\quad\times\Bigg(-|q|^2\left(4\frac{1+q^0}{\sqrt{2q^0+2}}+6\right)-2q^0-2(3(q^0)^2+3q^0+1)\\
		\notag	
	&\quad\quad\quad  -(2q^0+2)(2q^0+2+2\sqrt{2q^0+2}(q^0+1))\Bigg)\\
	&\ 	-\theta \frac{\sqrt{2q^0-2}|q| }{2\sqrt{2q^0+2}(q^0+1+\sqrt{2q^0+2})} .
	\notag	
\end{align} 
Here, we note that we find an exact cancellation to remove the highest order terms in $q^0$ to obtain the last identity, as the limit of $|q|\rightarrow \infty$ blows up otherwise. 

Then, we further compare the coefficients of the highest order terms in $|q|$ of the top and the bottom above.  Thus we obtain the following limit
\begin{equation}\label{P2lim}
	P_2 	\rightarrow  -\theta\frac{-4-4\sqrt{2}}{4\sqrt{2}}-\theta\frac{\sqrt{2}}{2\sqrt{2}}= \frac{1+\sqrt{2}}{2}\theta, \ \text{as} \ |q|\rightarrow \infty.
\end{equation}
We will use this limit at the end of the proof.
	
	Finally, we can observe from \eqref{P3} that $P_3$ would look like 
\begin{equation}\label{P3lim}\begin{split}
	P_3&=\theta^2\frac{\left(\gamma-1\right)^2}{4p^0\sqs |p+q|^4}\bigg(\sqs\left(\left(be+e^2\right)\left(p^0-q^0\right)\right)\\&\quad \quad+\left(1+p^0q^0-ad-be\right)\left(\sqs p^0-e^2+b^2+c^2\right)\bigg)\\
	&=\theta^2\frac{1}{4\sqrt{2q^0+2} (2q^0+2)(q^0+1+\sqrt{2q^0+2})^2} \bigg(\left(1+q^0\right)\sqrt{2q^0+2}\bigg)\\
	&=\theta^2 \frac{1}{8(q^0+1+\sqrt{2q^0+2})^2}\\
	&\rightarrow 0,\ \text{as} \ |q|\rightarrow \infty.
\end{split}
\end{equation}
Therefore, we conclude from \eqref{Alim}, \eqref{P2lim}, and \eqref{P3lim} that
\begin{equation}\label{Jqlim}\lim_{|q|\rightarrow \infty } \left|\det\left(\frac{\partial u}{\partial p}\right)\right|_{\text{at }p=0 \text{ and } q=-|q|w}=(1-\theta)^3+ \frac{1+\sqrt{2}}{2}\theta(1-\theta)^2.\end{equation}
 Finally, we take the limit as $\theta\rightarrow 1$ in \eqref{Jqlim} to finish the proof.
\end{proof}

This completes our discussion of the specific limit where the Jacobian in \eqref{det.up} can go to zero.  In the next section we will explain the results of our numerical study where we have seen that this Jacobian determiant in fact has a large number of distinct zeros.  

\section{Numerical Investigation of the Jacobian}\label{sec:numerical}

Understanding the roots of the Jacobian \eqref{det.up} provides us with information about the existence of solutions to the relativistic Boltzmann equation. We used numerical techniques to gain some understanding of the zeroes of the Jacobian. Random sampling of the domain showed that most points gave $\det \left( \frac{\partial u}{\partial p} \right) > 0$. Accordingly, we reduce the problem to that of finding $(\theta, p, q, w)$ which make the determinant negative. Following this, we pick another point with positive determinant and perform the bisection method along the path between these points. This allows us to obtain zeroes of arbitrary precision, relatively quickly. 

Examining the equations for the Jacobian in \eqref{det.up}, one can show that $\frac{\partial u}{\partial p_j}$ is continuous away from $p=q$. In general, there is a jump discontinuity along $p=q$, but this does not hinder the bisection method. Consider two points $\alpha = (\theta, p, q, w)$ and $\beta = (\theta', p', q', w')$. Let $\gamma(t) = (\theta(t), p(t), q(t), w(t))$ be the path from $\alpha$ to $\beta$. Then this path intersects the set $\{p=q\}$ if and only if $p(t) = q(t)$ for some $t$. Geometrically, if we plot $p(t)$ and $q(t)$ in $\R^3$, then this occurs if and only if these line segments intersect. This occurs with probability 0 and so in general the bisection method never encounters the jump discontinuity on $\{p=q\}$. Therefore the bisection method converges almost surely.

The descent algorithm chosen is random search. The algorithm begins by making an initial guess $(\theta, p, q, w)$. While $\det \left( \frac{\partial u}{\partial p} \right)(\theta, p, q, w) > 0$, a new point $(\theta, p', q', w')$ is chosen randomly in some ball about $(\theta, p, q, w)$. If $\det \left( \frac{\partial u}{\partial p} \right)(\theta, p', q', w') < \det \left( \frac{\partial u}{\partial p} \right)(\theta, p, q, w)$, then the guess is updated by setting $p = p', q=q', w=w'$. This method terminates after a negative determinant is found, or 100,000 iterations pass. Random search performed better than random guessing, particularly for small $\theta$. It did not find zeroes when $\theta \leq .1$. Random sampling of 200,000 points for $\theta \leq .1$ also failed to find negative values of the determinant. 

The script was written in SageMath and run on the General Purpose Cluster at the University of Pennsylvania. In order to guarantee high precision, we set the precision to 200 bits. Following this, we iterated over $\theta \in \{.01, .02, ..., .99\}$ and left $\theta$ fixed during the random search and bisection method. Before implementing this, the search algorithm tended to converge to values of $\theta$ close to 1. For each $\theta$, we performed the random search 50 times. After obtaining points $(\theta, p, q, w)$ and $(\theta, p', q', w')$ with determinants of opposite sign, we used the bisection method on these points and 49 other randomly generated points. This is done to obtain more data as the random search is computationally expensive. Finally, zeroes that do not satisfy the angle condition in \eqref{angle.condition} with \eqref{scattering.angle} are removed from the data set.   The written code used to run this algorithm is contained in \cite{chapman2020thesis}.

\begin{figure}[htbp]
\centering
\includegraphics[scale = .33]{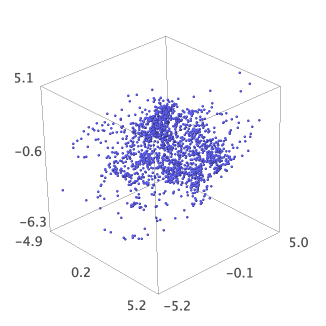}
\includegraphics[scale = .33]{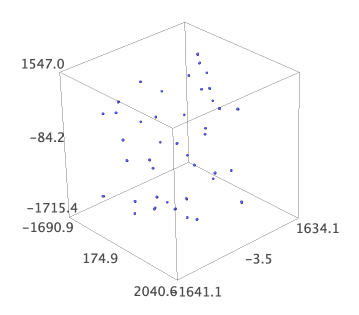}
\includegraphics[scale = .33]{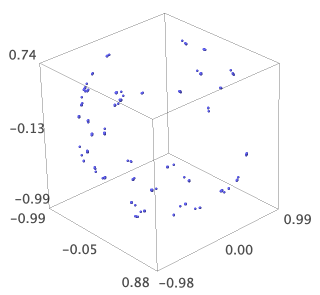}
\caption{3d plots of p, q, w, resp. (first 2000 points)}
\label{fig:PQWplot}
\end{figure}
Figure \ref{fig:PQWplot} shows plots of the roots. The plot for $p$ is very scattered, while the plots for $q$ and $w$ appear to be relatively ordered. This is due to the fact that only the first 2000 points are plotted, which corresponds to smaller values of $\theta$ in the data. This pattern does not hold for larger theta as can be seen in Figure \ref{fig:plots} where the data is very scattered. 
\begin{figure}[htbp]
\centering
\includegraphics[scale = .33]{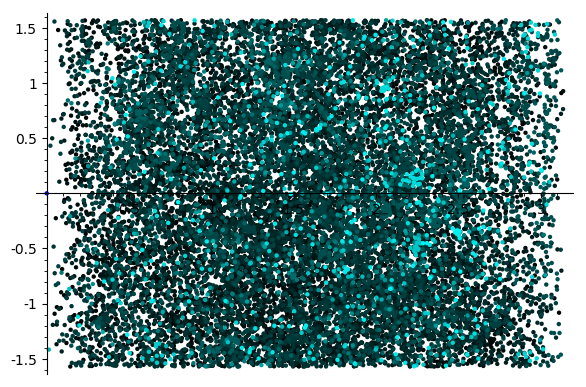}
\includegraphics[scale = .33]{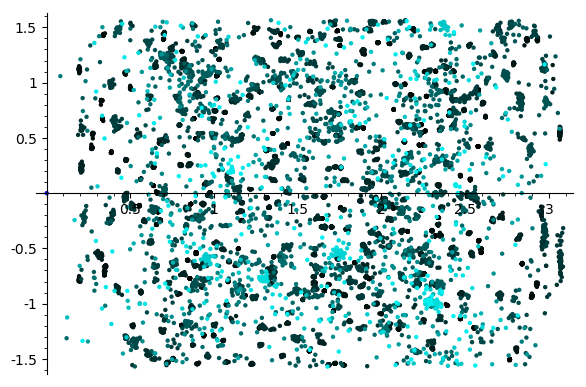}
\includegraphics[scale = .33]{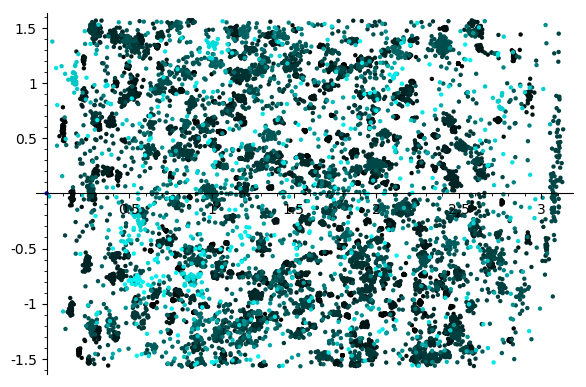}
\caption{These plots are obtained by taking $p, q, w$, resp., expressing them in spherical coordinates, and plotting the angular parts. Lighter color corresponds to larger $\theta$.}
\label{fig:plots}
\end{figure}

Examining plots of the data did not show any clear patterns. Figure \ref{fig:plots} shows that the zeros do not have a clear dependence on angle viewed independent of one another. In particular, the plot for $p$ displays roots of all angles and appears independent of $\theta$. The plots for $q$ and $w$ in Figure \ref{fig:plots} show more complexity and dependence on $\theta$, but there are no clear patterns in the plot.

	\appendix
	\section{An alternative representation of the Jacobian}\label{sec:ALTjacobian}
	
	In this appendix, we derive an alternative expression for the Jacobian determinant from the one in \eqref{det.up}.  For the expression in \eqref{det.up}, we remark that $P_2$ in \eqref{P2} and $P_3$ in \eqref{P3} are not independent of $A$ in \eqref{def.A} because both still contain the term $\frac{g\left(a+d\right)\left(\gamma-1\right)}{|p+q|^2}$ which is equal to $\frac{2}{\theta}\left(A-1+\frac{\theta}{2}\right)$. Define
	\begin{equation}\label{def.K}
	    K \eqdef \frac{g\left(a+d\right)\left(\gamma-1\right)}{|p+q|^2}=\frac{\left(\left(p+q\right)\cdot  w \right)g}{\left(\pqs\right)\sqs}.
	\end{equation}
The last calculation follows from \eqref{gamma.calc}, \eqref{def.abc} and \eqref{def.de}.
Therefore, as in \eqref{ineq.A}, $|K|$ is bounded by 1. Then we can write the Jacobian as a cubic polynomial in $K$ as in the following proposition:

	\begin{proposition} The Jacobian determinant can also be written as
		\label{Jaco2}
		$$\det\left( \frac{\partial u}{\partial p}\right)=D_1K^3+D_2K^2+D_3K+D_4,$$ where $D_i$ for $i=1,2,3,4$ is a function of $p$,$q$,$w$, and $\theta,$ which are defined explicitly in \eqref{Krepresentation} below.
	\end{proposition}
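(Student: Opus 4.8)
The plan is to take the cubic identity $\det\left(\frac{\partial u}{\partial p}\right)=A^3+P_2A^2+P_3A$ from Theorem \ref{Jacobian} and re-expand it as a polynomial in $K$ from \eqref{def.K}. The starting point is that $A$ in \eqref{def.A} is an \emph{affine} function of $K$: comparing \eqref{def.A} with \eqref{def.K} gives immediately $A=\left(1-\frac{\theta}{2}\right)+\frac{\theta}{2}K$, so that $A^3$, $A^2$, $A$ are genuine polynomials in $K$ of degrees $3,2,1$. Hence the whole task reduces to checking that $P_2$ and $P_3$ are themselves affine in $K$, say $P_2=p_2^{(0)}+p_2^{(1)}K$ and $P_3=p_3^{(0)}+p_3^{(1)}K$, with coefficients $p_i^{(j)}$ depending on $p,q,w,\theta$ but free of $K$; after that one substitutes everything and gathers like powers of $K$.

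First I would record the orthonormal-basis identities that absorb the $w$-dependence sitting inside \eqref{P2} and \eqref{P3}. With $a,b,c,d,e$ as in \eqref{def.abc} and \eqref{def.de} and $\bar w=\frac{w\times(p\times w)}{|p\times w|}=\frac{p-(p\cdot w)w}{|p\times w|}$, one computes $be=q\cdot\big(p-(p\cdot w)w\big)=p\cdot q-ad$, whence
\[
ad+be=p\cdot q,\qquad a+d=(p+q)\cdot w,\qquad q^0d-p^0a=(q^0p-p^0q)\cdot w,
\]
together with $d^2+e^2=|p|^2$, $a^2+b^2+c^2=|q|^2$, and $1+p^0q^0-p\cdot q=\tfrac s2$ from \eqref{s}. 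Substituting these into \eqref{P2}: the first term carries the factor $1+p^0q^0-ad-be=\tfrac s2$ and the second carries $ad+be+|p|^2=p\cdot(p+q)$, so the first three terms of \eqref{P2} become $w$-independent multiples of $(p+q)\cdot w=\frac{\sqs\left(\pqs\right)}{g}\,K$ (via \eqref{def.K}), while the last term equals exactly $\theta\frac{(q^0p-p^0q)\cdot w}{2gp^0}$, which carries no $K$. This exhibits $P_2$ as affine in $K$. For $P_3$ I would first rewrite $be+e^2=p\cdot(p+q)-(p\cdot w)\big((p+q)\cdot w\big)$ and $-e^2+b^2+c^2=|q|^2-|p|^2+\big((p-q)\cdot w\big)\big((p+q)\cdot w\big)$; inserting these and $1+p^0q^0-ad-be=\tfrac s2$ into \eqref{P3} converts each occurrence of $(p+q)\cdot w$ into a multiple of $K$, leaving $P_3=p_3^{(0)}+p_3^{(1)}K$ (here $p_3^{(1)}$ still carries the linear functionals $p\cdot w$ and $(p-q)\cdot w$, which is harmless since the $D_i$ are allowed to depend on $w$).

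Finally I would carry out the expansion: setting $\alpha\eqdef 1-\tfrac\theta2$ and $\beta\eqdef\tfrac\theta2$, multiplying out $(\alpha+\beta K)^3+(p_2^{(0)}+p_2^{(1)}K)(\alpha+\beta K)^2+(p_3^{(0)}+p_3^{(1)}K)(\alpha+\beta K)$ and collecting powers of $K$ gives $D_1=\beta^3+\beta^2p_2^{(1)}$, $D_2=3\alpha\beta^2+\beta^2p_2^{(0)}+2\alpha\beta p_2^{(1)}+\beta p_3^{(1)}$, $D_3=3\alpha^2\beta+2\alpha\beta p_2^{(0)}+\alpha^2p_2^{(1)}+\beta p_3^{(0)}+\alpha p_3^{(1)}$ and $D_4=\alpha^3+\alpha^2p_2^{(0)}+\alpha p_3^{(0)}$; substituting the explicit $p_i^{(j)}$ found above produces the formulas recorded in \eqref{Krepresentation}. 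I expect the only real obstacle here to be bookkeeping rather than ideas: the expressions \eqref{P2} and \eqref{P3} are long, so the delicate part is to isolate their exact $K$-dependence without dropping a factor or a sign. Conceptually, nothing beyond the affineness of $A$ in $K$ and the short list of basis identities above is required.
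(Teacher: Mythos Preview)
Your approach is correct and essentially identical to the paper's: both observe that $A$ is affine in $K$, verify that $P_2$ and $P_3$ are also affine in $K$, and then expand $(\alpha+\beta K)^3+(\alpha+\beta K)^2P_2+(\alpha+\beta K)P_3$ and collect powers. The only minor difference is that for $P_3$ the paper works from the intermediate expression \eqref{P3.appendix} rather than the final form \eqref{P3}; in \eqref{P3.appendix} one summand already carries the explicit factor $(a+d)$ and the other has none, so the decomposition $P_3=P_{31}K+P_{32}$ is immediate without the auxiliary identities for $be+e^2$ and $b^2+c^2-e^2$ that you set up.
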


 \begin{proof}[Proof of Proposition \ref{Jaco2}]
	We rewrite the coefficient $A$ in \eqref{def.A} and the coefficients $P_2$ and $P_3$ in \eqref{P2} and \eqref{P3} respectively in terms of $K$ as
	\begin{equation*}
		A=\left(1-\frac{\theta}{2}\right)+\frac{\theta}{2}K.
	\end{equation*}
	We now rewrite $P_2$ in terms of $K$. We obtain directly from \eqref{P2} with \eqref{def.K} that
\begin{align*}
	P_2=&K\theta\bigg(\frac{\left(\gamma-1\right)}{2|p+q|^2p^0g^2}\{\left(1+p^0q^0-ad-be\right)\left(p^0-q^0\right)\left(4\gamma+4-g^2\right)\\
	&-\left(ad+be+|p|^2\right)g^2\sqs\}+\frac{1}{2}\bigg)+\theta\pqhat\\
	\eqdef&P_{21}K+P_{22}.
\end{align*}	
Similarly, we use \eqref{P3.appendix} to calculate for $P_3$ that
	\begin{align*}
	P_3=&\theta^2\frac{\left(a+d\right)\left(\gamma-1\right)^2\sqs}{4p^0|p+q|^4}\left(abe-bde+ae^2-c^2d-b^2d\right)\\
	&\hspace*{7mm}+\theta^2\frac{\left(\gamma-1\right)}{4p^0|p+q|^2}\left(q^0\left(-be-e^2\right)+p^0\left(b^2+c^2+be\right)\right).
	\end{align*} Then, from \eqref{def.K}, we have
	\begin{align*}
	P_3	&=\theta^2\frac{\left(\gamma-1\right)\sqs}{4p^0|p+q|^2g}\bigg(K\left(abe-bde-ae^2-c^2d-b^2d\right)\\
	&\qquad+\frac{g}{\sqs}\{q^0\left(-be-e^2\right)+p^0\left(b^2+c^2+be\right)\}\bigg)
	\\
	&
	\eqdef P_{31}K+P_{32}.
	\end{align*}
	Now, using the simplifications above, the determinant from \eqref{det.up} is 
\begin{multline}\label{Krepresentation}
	\det\left(\mathbf{\Phi}\right)= A^3+A^2P_2+AP_3
	\\
	=\left(\left(1-\frac{\theta}{2}\right)+\frac{\theta}{2}K\right)^3+\left(\left(1-\frac{\theta}{2}\right)+\frac{\theta}{2}K\right)^2\left(P_{21}K+P_{22}\right)
	\\
+\left(\left(1-\frac{\theta}{2}\right)+\frac{\theta}{2}K\right)\left(P_{31}K+P_{32}\right)
	\\
	=\left(\left(\tet\right)^3+P_{21}\left(\tet\right)^2\right)K^3
	\\
	+\bigg(
	\tet\left(1-\tet\right)\left(\frac{3\theta}{2}+2P_{21}\right)
	+\tet\left(P_{22}\left(\tet\right)+P_{31}\right)\bigg)K^2
	\\
	+\bigg(3\left(1-\tet\right)^2\tet+\left(1-\tet\right)^2P_{21}
	+\theta\left(1-\tet\right)P_{22}+\left(1-\tet\right)P_{31}+\tet P_{32}\bigg)K
	\\
	+\left(\left(1-\tet\right)^3+P_{22}\left(1-\tet\right)^2+P_{32}\left(1-\tet\right) \right)
	\\
	\eqdef D_1K^3+D_2K^2+D_3K+D_4.
\end{multline}
This completes the proof.
	\end{proof}

\noindent{\bf Acknowledgements} J. W. Jang was supported by the DFG grant CRC 1060 of Germany and was partially supported by the NSF grant DMS-1500916 of the USA.  
R. M. Strain was partially supported by the NSF grant DMS-1764177 of the USA.

\providecommand{\bysame}{\leavevmode\hbox to3em{\hrulefill}\thinspace}
\providecommand{\href}[2]{#2}

\end{document}